\colorlet{Changes@Color}{blue}
\Crefname{appsec}{appendix}{appendices}
\newcommand{\R}{\mathbb{R}}
\newcommand{\C}{\mathbb{C}}
\title{Rigorous justification of the Whitham modulation theory for equations of NLS type}
\newtheorem{Theorem}{Theorem}
\newtheorem{Lemma}{Lemma}
\newtheorem{proposition}{Proposition}
\theoremstyle{plain}
\newtheorem{corollary}{Corollary}
\newtheorem{definition}{Definition}
\theoremstyle{remark}
\newtheorem{remark}{Remark}
\author{
	W.A. Clarke \\
	School of Mathematics and Statistics\\
	University of Sydney\\
	\texttt{wcla7359@uni.sydney.edu.au} \\
	%% examples of more authors
	\And
	R. Marangell \\
	School of Mathematics and Statistics\\
	University of Sydney\\
	\texttt{robert.marangell@sydney.edu.au} \\
}
\begin{document}
	\maketitle
	\begin{abstract}
		We study the modulational stability of periodic travelling wave solutions to equations of nonlinear Schr{\"o}dinger type. In particular, we prove that the characteristics of the quasi-linear system of equations resulting from a slow modulation approximation satisfy the same equation, up to a change in variables, as the normal form of the linearized spectrum crossing the origin. This normal form is taken from  \cite{LBJM2019}, where Leisman et al. compute the spectrum of the linearized operator near the origin via an analysis of Jordan chains. We derive the modulation equations using Whitham's formal modulation theory, in particular the variational principle applied to an averaged Lagrangian. We use the genericity conditions assumed in the rigorous theory of \cite{LBJM2019} to direct the homogenization of the modulation equations. As a result of the agreement between the equation for the characteristics and the normal form from the linear theory, we show that the hyperbolicity of the Whitham system is a necessary condition for modulational stability of the underlying wave.
	\end{abstract}

	% keywords can be removed
	%\keywords{First keyword \and Second keyword \and More}

	\section{Introduction}
	In this paper, we consider the modulational stability of periodic solutions to equations of nonlinear Schr{\"o}dinger type
	\begin{align}
	i\psi_{t} &= \psi_{xx} + \zeta f(\lvert\psi\rvert^{2})\psi \label{eq:1DNLS}
	\end{align}
	under perturbations in $ L_{2}(\R) $. The nonlinearity $ f(\lvert\psi\rvert^{2}) $ is arbitrary, but assumed to be well-behaved (double-integrable). This equation has a rich history of study; the cubic nonlinear Schr{\"o}dinger equation, $ f(\lvert\psi\rvert^{2}) = \pm\lvert\psi\rvert^{2}$, describes the envelope of a slowly modulated carrier wave in a dispersive medium \cite{SS1999,AS1981}. This equation, and others of NLS-type with higher order nonlinearities, arise in the study of a plethora of physical systems, including: water waves \cite{Zakharov1968,HO1972}; nonlinear optics \cite{Agrawal2013,HM2003}; plasma physics \cite{Chen2016,LTE2019,MOMT1976}; and Bose-Einstein condensates \cite{Gross1961,Pitaevskii1961}. \par
	Since the dynamics of \cref{eq:1DNLS} exhibit linear, nonlinear and modulatory behaviour, the literature includes analyses of linearized, orbital and modulational stability. If one chooses a suitable potential $ f(\lvert \psi\rvert^{2}) $ such that \cref{eq:1DNLS} is integrable, then it is possible to give an explicit description of the spectrum. The cubic nonlinear Schr{\"o}dinger equation is the best example of this \cite{BDN2011,DS2017}, however relying on integrability is not necessary \cite{GLCT2017}. Rowlands determined the spectral stability of stationary periodic solutions of the cubic nonlinear Schr{\"o}dinger equation subject to long-wavelength disturbances \cite{Rowlands1974}, and in so doing demonstrated modulational instability in the focusing case. Alfimov, Its and Kulagin \cite{AIK1990} constructed the homoclinic orbit for an unstable, spatially periodic solution to the focusing nonlinear Schr{\"o}dinger equation, essentially providing a nonlinear description of the modulational stability of this type of solution. Using the general methods of \cite{GSS1987,GSS1990}, Gallay and H\v{a}r\v{a}gu{\c{s}} proved that quasiperiodic, small-amplitude solutions to the cubic nonlinear Schr{\"o}dinger equation are orbitally stable within the class of solutions having the same period and Floquet multipler \cite{GH2007A}. They further proved that these solutions are linearly stable under bounded perturbations in the defocusing case, but linearly unstable in the focusing case. In \cite{GH2007B}, Gallay and H\v{a}r\v{a}gu{\c{s}} extend the orbital stability results in \cite{GH2007A} to solutions of any amplitude. \par
	This paper deals with modulational stability of \cref{eq:1DNLS}, that is, the spectral stability subject to long-wavelength perturbations. Rigorously speaking, this amounts to expanding the spectrum of the linearized operator in a neighbourhood of the origin in the spectral plane. Whitham modulation theory \cite{Whi1965A,Whi1965B,Whi1967,Whi1970,Whi1999} provides a formal procedure in which the modulational stability is computed by considering the hyperbolicity of a system of PDEs called the Whitham modulation equations. These modulation equations arise from an asymptotic expansion of the governing PDE via $ (x,t) \mapsto (\epsilon x,\epsilon t) $ along with a WKB approximation of the solution. To $ O(\epsilon) $, these equations are an homogeneous system of PDEs in terms of the slowly-varying parameters of the original PDE. Proving that the Whitham modulation theory accurately predicts the results of the rigorous analysis of the linearized spectrum is non-trivial, and to our knowledge is an open problem in the general case. Our present analysis is influenced by the examples of where this has been done, such as: the nonlinear Klein-Gordon equation \cite{Jones14}; the generalized Korteweg-de Vries equation \cite{JZ2010}; systems of viscous conservation laws \cite{Serre2005,OZ2006}; and a viscous fluid conduit equation \cite{JP2020}.\par
	In addition to the WKB approximation and asymptotic expansion, Whitham proves that there are several other, equivalent methods for deriving the Whitham modulation equations \cite{Whi1970,Whi1999}. In particular, these are: the variational principle applied to an averaged Lagrangian; and averaging and two-timing a system of conservation laws. As a result of unwieldy algebra that arises in the modulation equations, one often chooses the method which provides the simplest derivation of the Whitham modulation equations, thereafter making a change of variables in order to prove that the hyperbolicity of the Whitham system is equivalent to the linearized stability theory. When the modulational instability criterion from the linearized stability theory is given in terms of the derivatives of special quantities of the original PDE such as the period, mass and momentum, a useful technique is to introduce a classical action variable $ W $, since the derivatives of $ W $ are related to these special quantities. This is the case for both the nonlinear Klein-Gordon equation \cite{Jones14} and the generalized Korteweg-de Vries equation \cite{BJ2010,JZ2010}. This approach relies on the symmetry properties of Lagrangian systems, and Whitham in fact derives the modulation equations using the averaged Lagrangian method in the cases of both of these PDEs (only the non-generalized case for KdV) \cite{Whi1999}. \Cref{eq:1DNLS} admits a Lagrangian (cf. \cref{eq:nlsLagrangian}), and so we follow the averaged Lagrangian method in order to more easily derive the Whitham modulation equations. If, however, there is not an obvious Lagrangian but the travelling wave ODE is integrable, one can compute the tangent space to the manifold of travelling wave solutions and construct the kernel of the linearized operator from a basis of the tangent space. In \cite{Serre2005}, Serre performs this exact procedure to prove that the hyperbolicity criterion of the Whitham modulation equations agrees with the rigorous modulational stability analysis (as determined from an Evans function expansion) of a system of scalar, viscous conservation laws \cite{OZ2003}, which is later extended to the multi-dimensional case \cite{OZ2006}. For the conduit equation, which is neither completely integrable nor Langrangian in structure, Johnson and Perkins rigorously derive a matrix encoding the leading order asymptotics of the spectral curves at the origin via Floquet-Bloch theory and spectral perturbation theory \cite{JP2020}. The entries of this matrix agree exactly with the coefficients of the Whitham system, allowing for a more direct comparison of modulational instability criteria.\par
	There is a well-developed Whitham modulation theory for the nonlinear Schr{\"o}dinger equation; D{\"u}ll and Schneider proved that solutions to the Whitham modulation equations that are analytic in a strip are a valid approximation of spatial and temporal modulations of periodic wave solutions of the cubic nonlinear Schr{\"o}dinger equation \cite{DS2009}. A similar result exists for solutions to multiphase Whitham modulation equations in the context of coupled NLS equations \cite{BKS2020}. In the context of Sobolev spaces, proving that solutions to the Whitham modulation equations are a valid approximation to modulations of a periodic wavetrain requires spectral stability of the underlying wavetrain, and so Bridges et al. restricted their analysis to hyperbolic Whitham modulation equations for the defoccusing NLS equation \cite{BKZ2021}. In \cite{Bridges2015,BR2019}, the authors investigate the transition between a hyperbolic and elliptic system of Whitham modulation equations for cubic NLS in the single phase case and a coupled NLS system in the multiphase case. In \cite{Kamchatnov2000}, Kamchatnov provides techniques for calculating the Riemann invariants of the cubic nonlinear Schr{\"odinger} equation, from which the hyperbolicity of the Whitham modulation equations can be determined. Moreover, the Riemann invariants of a Whitham modulation system encode the asymptotic description of a dispersive shock wave; El and Hoefer provide an extensive discussion on this connection in \cite{EH2016}, as well as an analysis of dispersive shock waves in the case of cubic NLS and numerical simulations of dispersive shock wave behaviour for \cref{eq:1DNLS}. El and Hoefer also note that applying the Madelung transformation ($ \psi = \sqrt{\rho}e^{i\phi} $) to \cref{eq:1DNLS} yields the hydrodynamic system:
	\begin{align}
	\rho_{t} + (\rho u)_{x} &= 0 \label{eq:hd1}\\
	(\rho u)_{t} + \left(\rho u^{2} + P(\rho)\right)_{x} &= \left(\rho\left(\ln\rho\right)_{xx}\right)_{x},\label{eq:hd2}
	\end{align}
	with $ P(\rho) = \int_{0}^{\rho}2\zeta sf'(s)ds $ and $ u = -\phi_{x} $. Serre's results \cite{Serre2005} are not proven to extend to higher order dispersive terms such as $ \rho\left(\ln\rho\right)_{xx} $. We also point out that, as an intermediate step in deriving \cref{eq:hd1,eq:hd2}, we can also write \cref{eq:1DNLS} as an Euler-Korteweg system in Eulerian coordinates \cite{BGMR2016}:
	\begin{align}
		\rho_{t} + (\rho u)_{x} &= 0 \label{eq:EK1}\\
		u_{t} + uu_{x} + \left(\delta \mathscr{E}\right)_{x} &= 0, \label{eq:EK2}
	\end{align}
	with $ \mathscr{E} = -2\zeta F(\rho) + \frac{\rho_{x}^{2}}{2\rho} $ and $ \delta \mathscr{E} = \mathscr{E}_{\rho} - \partial_{x}\mathscr{E}_{\rho_{x}}$. Benzoni-Gavage et al. proved that weak hyperbolicity of the Whitham modulation equations associated with the Euler-Korteweg system is a necessary condition for spectral stability \cite[Theorem 1]{BGNR2014}. We distinguish our method from \cite{BGNR2014,Serre2005} by not recasting \cref{eq:1DNLS} as either the hydrodynamic system in \cref{eq:hd1,eq:hd2} nor as the Euler-Korteweg system in \cref{eq:EK1,eq:EK2}, but instead we directly compute the polynomial whose roots are the characteristics of the linearized Whitham modulation equations.\par
	The main result of this paper, \Cref{th:maintheorem}, shows that the Whitham modulation theory for \cref{eq:1DNLS} correctly predicts the rigorous spectral stability results contained in \cite{LBJM2019}. Leisman et. al. relate spectral instability of periodic travelling wave solutions of \cref{eq:1DNLS} near the origin of the spectral plane to the breakup of the generalized kernel of the operator of the linearized problem. The authors express the genericity conditions on the Jordan chains of the linearized operator in terms of matrices whose entries are moments of the travelling wave solutions. They use these matrices to derive a normal form for the spectrum of the linearized operator at the origin subject to both longitudinal and transverse perturbations. We give a brief summary of their results, before applying Whitham's averaged Lagrangian method to derive the Whitham modulation equations. We use the genericity conditions given in \cite{LBJM2019} to homogenize the modulation equations, turning them into a quasi-linear system of four equations in four slowly-modulated parameters of the travelling wave solutions. The characteristics of this system are the zeros of a quartic equation which, upon changing variables, we identify as the normal form for the four continuous bands of spectrum emerging from the origin.\par 
	We have organized this paper into two main sections. In \Cref{sec:s2}, we introduce the relevant spectral stability results of \cite{LBJM2019}, leading to a rigorous criterion for modulational instability in \Cref{cor:c1}. We begin \Cref{sec:s31} with a general overview of Whitham's averaged Lagrangian approach to deriving the modulation equations. We then apply this theory to \cref{eq:1DNLS} in \Cref{sec:s32}, from which we conclude in \Cref{cor:c2} that the Whitham theory criterion for modulational instability agrees exactly with the rigorous spectral analysis of \Cref{sec:s2}. We conclude the paper with a discussion in \Cref{sec:s4} as well as, to the best of our knowledge, several open problems in the area. We include some necessary, though long calculations, as well as, we hope, some useful identities in \Cref{appendix:app1,appendix:app2,appendix:app3,appendix:app4,appendix:app5,appendix:app6}. %{\color{blue}RM: Do we want to include a specific description of what is in each appendix?}
	\section{Modulational stability from the linear theory}\label{sec:s2}
	In this section, we follow the analysis of the modulational stability problem contained in \cite{LBJM2019}.\par
	We seek travelling wave solutions of \cref{eq:1DNLS} in the form
	\begin{align*}
	\psi(x,t) &= e^{i\omega t}\phi(x+ct).
	\end{align*}
	Writing $ y = x+ct $, we express $ \phi $ in polar coordinates
	\begin{align}
	\phi(y) &= \exp\left(i\left(\theta_{0} + \frac{cy}{2} + S(y+y_{0})\right)\right)A(y+y_{0}). \label{eq:NLSphi}
	\end{align}
	Substituting \cref{eq:NLSphi} into \cref{eq:1DNLS} and equating real and imaginary parts we have:
	\begin{align}
	&2A_{y}S_{y} + AS_{yy} = 0 \label{eq:NLSphaseode}\\
	&A_{yy} = -\left(\omega + \frac{c^{2}}{4}\right)A + AS_{y}^{2} - \zeta f(A^{2})A. \label{eq:NLSmodode}
	\end{align}
	Integrating \cref{eq:NLSphaseode} yields
	\begin{align}
	S_{y} &= \frac{\kappa}{A^{2}}, \label{eq:NLSSode}
	\end{align}
	which can be substituted into \cref{eq:NLSmodode} and upon integrating we have:
	\begin{align}
	A_{y}^{2} &= 2E - \left(\omega + \frac{c^{2}}{4}\right)A^{2} - \frac{\kappa^{2}}{A^{2}} - \zeta F(A^{2}). \label{eq:NLSode}
	\end{align}
	The symmetries of \cref{eq:1DNLS} allow us to eliminate several of the seven parameters $ E, \omega, c, \kappa, \zeta, y_{0},\theta_{0} $. In particular, \cref{eq:1DNLS} is invariant under the transformations (assume $ \alpha \in \R $):
	\begin{itemize}
		\item $ \psi(x,t) \mapsto \psi(x,t)e^{i \alpha} \quad$ (phase invariance);\\
		\item $ \psi(x,t) \mapsto \psi(x + \alpha,t) \quad$ (translation invariance);\\
		\item $ \psi(x,t) \mapsto \psi(x + \alpha t,t)e^{-i\big(\frac{\alpha}{2}x + \frac{\alpha^{2}}{4}t\big)} \quad$ (Galilean invariance).
	\end{itemize}
	Phase and translation invariance means that we can eliminate $ \theta_{0} $ and $ y_{0} $ from \cref{eq:NLSphi}. Furthermore, by Galilean invariance we can reduce $ \psi $ to:
	\begin{align}
	\psi(x,t) &= e^{i\left(\omega + \frac{c^{2}}{4}\right)t}e^{iS(x)}A(x). \label{eq:reducedpsi}
	\end{align}
	Both \cref{eq:reducedpsi} and \cref{eq:NLSode} suggest that we can absorb $ \frac{c^{2}}{4} $ into $ \omega $ (or equivalently set $ c = 0 $), eliminating $ c $ from the equations. This leads us to make the following definition about the parameter space $ \Omega $:
	\begin{definition}[Definition 1 \cite{LBJM2019}]\label{def:d1} We define the parameter domain $ \Omega $ as the open set of parameter values $ (E,\kappa,\omega,\zeta) $ such that:
		\begin{itemize}
			\item $ \kappa > 0; $\\
			\item The function $ P(A) = 2E - \omega A^{2} - \frac{\kappa^{2}}{A^{2}} - \zeta F(A^{2}) $ has two positive, real, simple roots $ a_{-},\; a_{+} $ with $ a_{-} < a_{+} $ and $ P(A) $ is real and positive for $ A \in (a_{-},a_{+}) $. 
		\end{itemize}
	\end{definition}
	As explained in \cite[Remark 1]{LBJM2019}, parameter values in $ \Omega $ represent the most generic case and allow us to freely differentiate with respect to the parameters. The cases where $ \kappa = 0 $ or $ R(A) $ has higher order roots correspond to degenerate cases which require their own analyses. We assume henceforth that our parameters are taking values in $ \Omega $.\par
	We are interested in periodic solutions $ A $ to \cref{eq:NLSSode,eq:NLSode}. We make the assumption $ A(0) = a_{-},\; A(\frac{T_{*}}{2}) = a_{+} $, which we can always guarantee by translating the wave. Under these conditions, the function $ \phi $ has the quasi-periodic boundary conditions:
	\begin{align}
	\phi(T_{*}) &= \phi(0)e^{i\eta} \label{eq:qpL}\\
	\phi_{y}(T_{*}) &= \phi_{y}(0)e^{i\eta},
	\end{align}
	where the period $ T_{*} $ and the quasi-momentum $ \eta = S(T_{*}) - S(0) $ are functions of the parameters $ E,\kappa,\omega,\zeta $. These quantities, and the mass $ M $, can be expressed as derivatives of the classical action:
	\begin{align}
	K(E,\kappa,\omega,\zeta) &= \int_{0}^{T_{*}}A_{y}^{2}dy = 2\int_{a_{-}}^{a_{+}}\sqrt{2E - \omega A^{2} - \frac{\kappa^{2}}{A^{2}} - \zeta F(A^{2})}dA. \label{eq:classicalAction}
	\end{align}
	In particular, we have:
	\begin{align}
	\begin{split}
	T_{*}(E,\kappa,\omega,\zeta) &= 2\int_{a_{-}}^{a_{+}}\frac{1}{A_{y}}dA = 2\int_{a_{-}}^{a_{+}}\frac{1}{\sqrt{2E - \omega A^{2} - \frac{\kappa^{2}}{A^{2}} - \zeta F(A^{2})}}dA\\
	&= \frac{\partial K}{\partial E} \label{eq:TNLS}
	\end{split}\\
	\begin{split}
	\eta(E,\kappa,\omega,\zeta) &= S(T_{*}) - S(0) = 2\int_{a_{-}}^{a_{+}}\frac{\kappa}{A^{2}A_{y}}dA\\
	&= 2\int_{a_{-}}^{a_{+}}\frac{\kappa}{A^{2}\sqrt{2E - \omega A^{2} - \frac{\kappa^{2}}{A^{2}} - \zeta F(A^{2})}}dA\\
	&= -\frac{\partial K}{\partial\kappa} \label{eq:etaNLS}
	\end{split}\\
	\begin{split}
	M(E,\kappa,\omega,\zeta) &= 2\int_{a_{-}}^{a_{+}}\frac{A^{2}}{A_{y}}dA  = 2\int_{a_{-}}^{a_{+}}\frac{A^{2}}{\sqrt{2E - \omega A^{2} - \frac{\kappa^{2}}{A^{2}} - \zeta F(A^{2})}}dA\\
	&= -2\frac{\partial K}{\partial\omega}. \label{eq:MNLS}
	\end{split}
	\end{align}
	We now consider the linearized problem under the perturbation:
	\begin{align*}
	\psi(y,t) &= e^{i\omega t}\left(\phi(y) + \epsilon e^{i[S(y+y_{0}) + \frac{cy}{2} + \theta_{0}]}W(y,t)\right).
	\end{align*}
	Since the phase $ \exp(i[S(y+y_{0}) + \frac{cy}{2} + \theta_{0}])$ is also present in $ \phi(y) $ (\cref{eq:NLSphi}), then we can consider $ W $ as a complex-valued perturbation of $ A $. At $ O(\epsilon) $ we have:
	\begin{align}
	iW_{t} = W_{yy} + \omega W - S_{y}^{2}W + \zeta f(A^{2})W + 2\zeta f'(A^{2})A^{2}\mathrm{Re}(W) + i(S_{yy}W + 2S_{y}W_{y}). \label{eq:linearizedNLS}
	\end{align}
	Writing $ W(y,t) = u(y,t) + iv(y,t) $ in \cref{eq:linearizedNLS}, we have the two linearized equations
	\begin{align*}
	u_{t} &= v_{yy} + \omega v - S_{y}^{2}v + \zeta f(A^{2})v + S_{yy}u + 2S_{y}u_{y}\\
	-v_{t} &= u_{yy} + \omega u - S_{y}^{2}u + \zeta f(A^{2})u + 2\zeta f'(A^{2})A^{2}u - S_{yy}v - 2S_{y}v_{y},
	\end{align*}
	which can be collected into the equation
	\begin{align}
	\begin{pmatrix}
	u\\v
	\end{pmatrix}_{t} &= \begin{pmatrix}
	\mathcal{K} & -\mathcal{L}_{-}\\
	\mathcal{L}_{+} & \mathcal{K}
	\end{pmatrix}\begin{pmatrix}
	u\\v
	\end{pmatrix} \label{eq:linearMatrixProblem}
	\end{align}
	where
	\begin{align*}
	\mathcal{K} &= S_{yy} + 2S_{y}\partial_{y}\\
	\mathcal{L}_{+} &= -\omega - \partial_{yy} + S_{y}^{2} - \zeta f(A^{2}) - 2\zeta f'(A^{2})A^{2}\\
	\mathcal{L}_{-} &= -\omega - \partial_{yy} + S_{y}^{2} - \zeta f(A^{2}).
	\end{align*}
	We can further manipulate \cref{eq:linearMatrixProblem}:
	\begin{align}
	\begin{pmatrix}
	u\\v
	\end{pmatrix}_{t} &= \begin{pmatrix}
	0 & -1\\
	1 & 0
	\end{pmatrix}\begin{pmatrix}
	\mathcal{L}_{+} & \mathcal{K}\\
	\mathcal{K}^{T} & \mathcal{L}_{-}
	\end{pmatrix}\begin{pmatrix}
	u\\v
	\end{pmatrix}\\
	&= \mathcal{L}\begin{pmatrix}
	u\\v
	\end{pmatrix}. \label{eq:HamiltonianProblem}
	\end{align}
	Since $ \mathcal{L} $ is the product of a skew-symmetric operator and symmetric operator then \cref{eq:HamiltonianProblem} defines a Hamiltonian eigenvalue problem. From here, the idea is to analyze the Jordan chains of $ \mathcal{L} $. In particular, we wish to find the genericity conditions on the Jordan structure of the linearized operator in terms of the derivatives of $ K $. Having found these conditions, we can then perturb $ \mathcal{L} $ and analyze the affect this has on the generalized kernels. The following result summarizes the genericity conditions.
	\begin{Theorem}[Theorem 1 \cite{LBJM2019}]\label{th:t1}
		Assume that:
		\begin{itemize}
			\item $ (E,\kappa,\omega,\zeta) \in \Omega; $
			\item $ F(x) $ is linearly independent from $ 1,x,\frac{1}{x}; $
			\item $ A(y) $ is non-constant.
		\end{itemize}
		The generalized kernel of $ \mathcal{L} $ (defined in \cref{eq:HamiltonianProblem}) generically takes the form of a direct sum of two Jordan chains of length two. Making the definitions
		\begin{align*}
		\sigma &\coloneqq \{T_{*},\eta\}_{E,\kappa} = T_{*E}\eta_{\kappa} - T_{*\kappa}\eta_{E}\\
		D &\coloneqq \begin{vmatrix}
		K_{\kappa\kappa} & K_{\kappa E} & K_{\kappa\omega} & T_{*}\\
		K_{\kappa E} & K_{EE} & K_{E\omega} & 0\\
		K_{\kappa\omega} & K_{E\omega} & K_{\omega\omega} & 0\\
		T_{*} & 0 & 0 & -M
		\end{vmatrix},
		\end{align*}
		then the genericity conditions on these Jordan chains are:
		\begin{align}
		\sigma &\neq 0\\
		D &\neq 0.
		\end{align}
	\end{Theorem}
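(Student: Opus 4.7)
The plan is to exploit the Hamiltonian structure of $\mathcal{L}$ together with the symmetry group of \cref{eq:1DNLS} to build the generalized kernel explicitly. First I would identify $\ker \mathcal{L}$ using the two continuous symmetries that remain after the reduction to the travelling frame: phase invariance $\psi \mapsto e^{i\alpha}\psi$ and translation invariance $\psi \mapsto \psi(\cdot + \alpha, \cdot)$. Differentiating in $\alpha$ at $\alpha = 0$ and translating the result into the $(u,v)$ variables of \cref{eq:HamiltonianProblem} yields two explicit quasi-periodic solutions $\vec{v}_{1}, \vec{v}_{2}$ of $\mathcal{L}\vec{v}_{i} = 0$. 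The assumption that $A$ is non-constant ensures that these two kernel elements are linearly independent, while the linear independence of $F$ from $1,x,1/x$ is what rules out the existence of extra ``hidden'' symmetries (scaling, a second Galilean relation, etc.) that would enlarge $\ker\mathcal{L}$ beyond dimension two.

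Next I would construct length-two Jordan chain extensions from parameter derivatives of the travelling wave. The variations $\partial_{E}\phi, \partial_{\kappa}\phi, \partial_{\omega}\phi$ of the travelling-wave ODE \cref{eq:NLSode} each produce functions satisfying $\mathcal{L}\vec{u} = \vec{v}$ for some $\vec{v}\in \ker\mathcal{L}$ on $\R$, but they are not quasi-periodic in general: translating by a period $T_{*}$ introduces drifts proportional to $T_{*E},\eta_{E},T_{*\kappa},\eta_{\kappa}$, as follows from differentiating \cref{eq:qpL} with respect to the parameters. Enforcing the correct Floquet multiplier on a linear combination $\alpha\,\partial_{E}\phi + \beta\,\partial_{\kappa}\phi$ leads to a $2\times 2$ linear system whose determinant is precisely the Poisson bracket $\sigma = \{T_{*},\eta\}_{E,\kappa}$. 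The hypothesis $\sigma \neq 0$ then allows me to solve uniquely for two generalized eigenvectors $\vec{w}_{1},\vec{w}_{2}$ with $\mathcal{L}\vec{w}_{i}=\vec{v}_{i}$, producing Jordan chains of length at least two.

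The final step is to show the chains cannot be extended to length three. Because $\mathcal{L}=JL$ with $J$ skew and $L$ self-adjoint, the Fredholm solvability condition for $\mathcal{L}\vec{z}=\vec{w}_{i}$ is the vanishing of the symplectic pairings $\langle J\vec{v}_{j},\vec{w}_{i}\rangle$ for $j=1,2$. Expanding these pairings as moment integrals in $A$ and $S$, and then converting the moments into partial derivatives of $K(E,\kappa,\omega,\zeta)$ via \cref{eq:classicalAction}--\cref{eq:MNLS} and their second-order analogues, should rewrite the $2\times 2$ solvability obstruction as a Schur-complement of precisely the $4\times 4$ determinant $D$ in the statement, with the distinguished bottom-row entries $T_{*},0,0,-M$ coming from the coupling to the mass pairing. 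When $D\neq 0$ this pairing matrix is non-degenerate, so the solvability conditions cannot hold and the chains terminate; together with $\sigma \neq 0$ this forces $\dim\ker_{\mathrm{gen}}\mathcal{L}=4$ split as two Jordan chains of length two.

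The main obstacle is the bookkeeping that links the abstract Fredholm pairings to the explicit determinant $D$. One must carefully track how the moments $\int A^{2}\,dy$, $\int A^{-2}\,dy$ and their derivatives correspond to entries of the Hessian of $K$, using both \cref{eq:NLSSode} (to handle phase-derivative integrals via $\kappa$) and the variation of $A_{y}$ through $P(A)$. The linear independence hypothesis on $F$ will be used again here to guarantee that the $4\times 4$ matrix defining $D$ has generic entries, so that $D\neq 0$ is a meaningful non-degeneracy condition rather than a consequence of a hidden algebraic identity.
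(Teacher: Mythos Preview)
This paper does not actually contain a proof of \Cref{th:t1}: the theorem is quoted from \cite{LBJM2019} and the authors explicitly note (in the remark immediately following the statement) that they have omitted the detailed Jordan-chain computations, since only the genericity conditions $\sigma\neq 0$ and $D\neq 0$ are used downstream. So there is no ``paper's own proof'' against which to compare your proposal.

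That said, your outline is the standard and correct strategy for results of this type, and it matches the hints the paper does give about the argument in \cite{LBJM2019}. In particular: the two-dimensional kernel is indeed generated by the phase and translation symmetries; the generalized eigenvectors are built from parameter derivatives of the profile with $\sigma=\{T_{*},\eta\}_{E,\kappa}\neq 0$ exactly the condition that the quasi-periodicity drifts can be cancelled; and the obstruction to length-three chains is a $2\times 2$ symplectic pairing matrix. One small discrepancy worth flagging: the paper's second remark after \Cref{th:t1} states that in \cite{LBJM2019} the authors reduce the $4\times 4$ determinant $D$ to the $2\times 2$ pairing matrix $\begin{psmallmatrix}a_{2}&b_{2}\\ b_{2}&d_{2}\end{psmallmatrix}$ via the Dodgson--Jacobi--Desnanot condensation identity, obtaining $-\tfrac{\sigma^{3}}{4}D=a_{2}d_{2}-b_{2}^{2}$. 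Your plan invokes a Schur complement instead; the two are closely related but not identical, and the condensation identity is what naturally produces the extra factor of $\sigma^{3}$ linking the $4\times 4$ and $2\times 2$ determinants. If you carry out your computation you should expect that factor to appear, and recognising it as a Desnanot--Jacobi relation rather than a Schur complement will make the bookkeeping cleaner.
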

	\begin{remark}
		We have chosen to omit several results from \Cref{th:t1} in \cite{LBJM2019}, such as the explicit calculation of the Jordan chains of $ \mathcal{L} $. This is because the Whitham theory in the later sections of this paper makes use of just the genericity conditions on these chains.
	\end{remark}
	\begin{remark}
		In the proof of \Cref{th:t1}, the authors calculate the determinant $ D $ in terms of $ T_{*},M,\eta $ and the Poisson bracket quantities defined in \cref{appendix:app1}. Using the Dodgson-Jacobi-Desnanot condensation identity, they calculate:
		\begin{align*}
		-\frac{\sigma^{3}}{4}D &= \begin{vmatrix}
		a_{2} & b_{2}\\
		b_{2} & d_{2}
		\end{vmatrix},
		\end{align*}
		where $ a_{2},b_{2},d_{2} $ are defined in \cref{appendix:app1}.
	\end{remark}
	We now examine the breakup of the Jordan chains of $ \mathcal{L} $ under perturbations of the quasi-periodic boundary conditions. We start with the following proposition:
	\begin{proposition}[Proposition 1 \cite{LBJM2019}]\label{prop:p1}
		Let $ \mathcal{L} $ be an operator with compact resolvent. Suppose further that $ \mathcal{L} $ has a $ d $-dimensional kernel spanned by $ \{\mathbf{u}_{2j}\}_{j=0}^{d-1} $ and a $ d $-dimensional first generalized kernel spanned by $ \{\mathbf{u}_{2j+1}\}_{j=0}^{d-1} $ satisfying $ \mathcal{L}\mathbf{u}_{2j+1} = \mathbf{u}_{2j}$. Suppose similarly that $ \mathcal{L} $ has a left basis satisfying $ \mathbf{v}_{2j+1}\mathcal{L} = 0,\;\mathbf{v}_{2j}\mathcal{L} = \mathbf{v}_{2j+1} $. Consider a perturbation of the form $ \mathcal{L} + \mu\mathcal{L}^{(1)} + \mu^{2}\mathcal{L}^{(2)} $, where $ \mathcal{L}^{(1)}(\lambda - \mathcal{L})^{-1}$, $\mathcal{L}^{(1)}(\lambda - \mathcal{L}^{(1)})^{-1}\mathcal{L}^{(1)} $ and $ \mathcal{L}^{(2)} $ are bounded operators. Suppose that the first order perturbation satisfies the conditions:
		\begin{align*}
		\mathbf{v}_{2j+1}\mathcal{L}^{(1)}\mathbf{u}_{2k} = 0 \quad \forall j,k = 0,\dots,d-1.
		\end{align*}
		Then, to leading order in $ \mu $, the $ 2d $-dimensional generalized kernel breaks up into $ 2d $ eigenspaces, with the eigenvalues given by
		\begin{align*}
		\lambda(\mu) = \lambda_{1}\mu + O(\mu^{2}),
		\end{align*}
		where $ \lambda_{1} $ is a root of the polynomial
		\begin{align*}
		\det(\lambda_{1}^{2}\mathbf{M}^{(2)} + \lambda_{1}\mathbf{M}^{(1)} + \mathbf{M}^{(0)}) = 0,
		\end{align*}
		with the $ d\times d $ matrices $ \mathbf{M}^{(i)} $ are defined as:
		\begin{align*}
		\mathbf{M}_{j,k}^{(2)} &= \mathbf{v}_{2j+1}\mathbf{u}_{2k+1}\\
		\mathbf{M}_{j,k}^{(1)} &= -\mathbf{v}_{2j}\mathcal{L}^{(1)}\mathbf{u}_{2k} - \mathbf{v}_{2j+1}\mathcal{L}^{(1)}\mathbf{u}_{2k+1}\\
		\mathbf{M}_{j,k}^{(0)} &= \mathbf{v}_{2j+1}\mathcal{L}^{(1)}\mathcal{L}^{-1}\mathcal{L}^{(1)}\mathbf{u}_{2k} - \mathbf{v}_{2j+1}\mathcal{L}^{(2)}\mathbf{u}_{2k}.
		\end{align*}
	\end{proposition}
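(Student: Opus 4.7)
The plan is to perform a regular perturbation expansion of $(\mathcal{L} + \mu\mathcal{L}^{(1)} + \mu^2\mathcal{L}^{(2)})\mathbf{u} = \lambda\mathbf{u}$ and extract the stated determinantal condition as a Fredholm solvability criterion at second order. Generically a length-two Jordan block breaks up via a square-root branch, $\lambda \sim \sqrt{\mu}$; the hypothesis $\mathbf{v}_{2j+1}\mathcal{L}^{(1)}\mathbf{u}_{2k}=0$ is precisely what kills the $\sqrt{\mu}$ contribution, so the regular ansatz
\[
\mathbf{u}(\mu) = \mathbf{u}^{(0)} + \mu\mathbf{u}^{(1)} + \mu^2\mathbf{u}^{(2)} + O(\mu^3), \qquad \lambda(\mu) = \lambda_1\mu + \lambda_2\mu^2 + O(\mu^3)
\]
is the correct one. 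The stated boundedness of $\mathcal{L}^{(1)}(\lambda - \mathcal{L})^{-1}$, $\mathcal{L}^{(1)}(\lambda - \mathcal{L})^{-1}\mathcal{L}^{(1)}$ and $\mathcal{L}^{(2)}$, together with the compact resolvent of $\mathcal{L}$, promote this formal series to a genuine asymptotic statement via analytic perturbation theory for isolated eigenvalues of finite algebraic multiplicity.

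Substituting and grouping in powers of $\mu$, at $O(\mu^0)$ the equation $\mathcal{L}\mathbf{u}^{(0)}=0$ forces $\mathbf{u}^{(0)} = \sum_k \alpha_k \mathbf{u}_{2k}$ for undetermined coefficients $\alpha_k$. Observe that $\mathbf{v}_{2j+1}\mathbf{u}_{2k}=0$ automatically, since $\mathbf{v}_{2j+1}\mathbf{u}_{2k} = \mathbf{v}_{2j+1}\mathcal{L}\mathbf{u}_{2k+1} = 0$. At $O(\mu)$, I need $\mathcal{L}\mathbf{u}^{(1)} = \lambda_1 \mathbf{u}^{(0)} - \mathcal{L}^{(1)}\mathbf{u}^{(0)}$; each Fredholm pairing against $\mathbf{v}_{2j+1}$ vanishes (the $\lambda_1$ term by the identity just noted, the $\mathcal{L}^{(1)}$ term by hypothesis), so the equation is solvable. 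Writing $\mathcal{L}^{-1}\mathbf{u}_{2k} = \mathbf{u}_{2k+1}$ modulo $\ker\mathcal{L}$, I then take
\[
\mathbf{u}^{(1)} = \lambda_1 \sum_k \alpha_k \mathbf{u}_{2k+1} - \mathcal{L}^{-1}\mathcal{L}^{(1)}\mathbf{u}^{(0)} + \mathbf{u}^{(1)}_{\mathrm{hom}},
\]
where $\mathcal{L}^{-1}$ denotes any pseudo-inverse on $\operatorname{range}(\mathcal{L})$ and $\mathbf{u}^{(1)}_{\mathrm{hom}} \in \ker\mathcal{L}$.

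At $O(\mu^2)$ the equation $\mathcal{L}\mathbf{u}^{(2)} + \mathcal{L}^{(1)}\mathbf{u}^{(1)} + \mathcal{L}^{(2)}\mathbf{u}^{(0)} = \lambda_1 \mathbf{u}^{(1)} + \lambda_2 \mathbf{u}^{(0)}$ is paired with $\mathbf{v}_{2j+1}$; the first and last terms drop out. Using $\mathbf{v}_{2j+1}\mathcal{L}^{-1} = \mathbf{v}_{2j}$ on the range (derived from $\mathbf{v}_{2j}\mathcal{L} = \mathbf{v}_{2j+1}$, with independence of the preimage because $\mathbf{v}_{2j+1}$ annihilates $\ker\mathcal{L}$), substituting $\mathbf{u}^{(1)}$, and dropping terms that vanish by hypothesis, the remaining identity becomes
\[
\sum_k \bigl[\lambda_1^2\mathbf{v}_{2j+1}\mathbf{u}_{2k+1} - \lambda_1(\mathbf{v}_{2j+1}\mathcal{L}^{(1)}\mathbf{u}_{2k+1} + \mathbf{v}_{2j}\mathcal{L}^{(1)}\mathbf{u}_{2k}) + \mathbf{v}_{2j+1}\mathcal{L}^{(1)}\mathcal{L}^{-1}\mathcal{L}^{(1)}\mathbf{u}_{2k} - \mathbf{v}_{2j+1}\mathcal{L}^{(2)}\mathbf{u}_{2k}\bigr]\alpha_k = 0,
\]
which is exactly $(\lambda_1^2 \mathbf{M}^{(2)} + \lambda_1 \mathbf{M}^{(1)} + \mathbf{M}^{(0)})\boldsymbol{\alpha} = 0$. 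Non-triviality of $\boldsymbol{\alpha}$ forces the stated determinant to vanish, yielding $2d$ admissible slopes $\lambda_1$ in bijection with the dimension of the generalized kernel being perturbed.

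The main obstacle is the careful bookkeeping around the pseudo-inverse $\mathcal{L}^{-1}$, which is defined only modulo $\ker\mathcal{L}$: one must verify that the ambiguity does not leak into the final solvability condition, which happens precisely because every offending contribution is paired with some $\mathbf{v}_{2j+1}$ and $\mathbf{v}_{2j+1}$ annihilates $\ker\mathcal{L}$. A secondary but conceptually essential point is ruling out the square-root scaling: the alternative ansatz $\lambda = \sqrt{\mu}\tilde\lambda$ reduces, under the hypothesis, to $\tilde\lambda^2 \mathbf{M}^{(2)}\boldsymbol{\alpha} = 0$ at leading order, which for generically invertible $\mathbf{M}^{(2)}$ forces $\tilde\lambda = 0$, vindicating the linear ansatz $\lambda = O(\mu)$ used above.
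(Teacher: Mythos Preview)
The paper does not actually prove this proposition: it is quoted verbatim from \cite{LBJM2019} and used as a black box, so there is no in-paper proof to compare against. Your argument is the standard Lyapunov--Schmidt reduction for a perturbed Jordan block and is correct; in particular your handling of the pseudo-inverse ambiguity (showing that every stray $\ker\mathcal{L}$ contribution is eventually paired with a $\mathbf{v}_{2j+1}$ and hence vanishes) and your remark ruling out the $\sqrt{\mu}$ branch are exactly the points that need care, and you have addressed both.
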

	\Cref{prop:p1} allows us to relate the eigenvalues $ \lambda $ near the origin to the break up of the generalized kernels of $ \mathcal{L} $ under small perturbations. We will first deal with the case of longitudinal perturbations, that is, the 1D NLS equation \cref{eq:1DNLS}. In particular, when we substitute
	\begin{align*}
	\begin{pmatrix}
	u\\v
	\end{pmatrix} = e^{\lambda t}\mathbf{u}(y)
	\end{align*}
	into the linearized problem \cref{eq:HamiltonianProblem} we have:
	\begin{align}
	\mathcal{L}\mathbf{u} = \lambda \mathbf{u}.\label{eq:spectralProblem}
	\end{align}
	\Cref{eq:spectralProblem} can be written as a system of four first order ODEs in $ y $ with $ T_{*} $-periodic coefficients, and so by applying Floquet theory we see that the spectrum of $ \mathcal{L} $ is the union over all $ \mu \in (-\frac{\pi}{T_{*}},\frac{\pi}{T_{*}}] $ of the spectrum of $ \mathcal{L} $ with quasi-periodic boundary conditions:
	\begin{align*}
	\mathbf{u}(T_{*}) &= e^{i\mu T_{*}}\mathbf{u}(0)\\
	\mathbf{u}_{y}(T_{*}) &= e^{i\mu T_{*}}\mathbf{u}_{y}(0).
	\end{align*}
	If instead we write $ \mathbf{u} = e^{i\mu y}\mathbf{v} $, then $ \mathbf{v} $ has periodic boundary conditions, and the $ \mu $-dependency is instead captured by a $ \mu $-dependent operator:
	\begin{align*}
	\mathcal{L}(\mu)\mathbf{v} &= \lambda\mathbf{v}\\
	\mathcal{L}(\mu) &\coloneqq \mathcal{L} + 2i\mu\begin{pmatrix}
	S_{y} & \partial_{y}\\
	-\partial_{y} & S_{y}
	\end{pmatrix} + \mu^{2}\begin{pmatrix}
	0 & -1\\
	1 & 0
	\end{pmatrix}.
	\end{align*}
	Taking 
	\begin{align*}
	\mathcal{L}^{(1)} &= 2i\begin{pmatrix}
	S_{y} & \partial_{y}\\
	-\partial_{y} & S_{y}
	\end{pmatrix}\\
	\mathcal{L}^{(2)} &= \begin{pmatrix}
	0 & -1\\
	1 & 0
	\end{pmatrix}
	\end{align*}
	allows us to apply \cref{prop:p1}, leading to the following theorem.
	\begin{Theorem}[Corollary 1 \cite{LBJM2019}]\label{th:t2}
		Suppose that $ \mathcal{L} $ is defined as in \cref{eq:HamiltonianProblem}, and further suppose that the genericity conditions of \Cref{th:t1} apply. For small values of the Floquet exponent $ \mu $, the normal form for the four continuous bands of spectrum emerging from the origin in the spectral plane is:
		\begin{align}
		\det\left(\lambda^{2}\begin{pmatrix}
		a_{2} & b_{2}\\
		b_{2} & d_{2}
		\end{pmatrix} + \lambda\mu\begin{pmatrix}
		a_{1} & b_{1}\\
		b_{1} & d_{1}
		\end{pmatrix} + \mu^{2}\begin{pmatrix}
		a_{0} & b_{0}\\
		b_{0} & d_{0}
		\end{pmatrix}\right) + O(5) = 0, \label{eq:lnormalform}
		\end{align}
		where $ O(5) $ consists of terms $ \lambda^{i}\mu^{j} $ with $ i,j > 0 $ and $ i + j \geq 5 $. The matrix entries are given in \cref{appendix:app1}.
	\end{Theorem}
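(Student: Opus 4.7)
The strategy is to apply \Cref{prop:p1} directly to the $\mu$-dependent operator $\mathcal{L}(\mu) = \mathcal{L} + \mu\mathcal{L}^{(1)} + \mu^{2}\mathcal{L}^{(2)}$ constructed just above the theorem, using the Jordan structure provided by \Cref{th:t1} as the input data. Because \Cref{th:t1} guarantees, under $\sigma \neq 0$ and $D \neq 0$, that the generalized kernel of $\mathcal{L}$ splits as a direct sum of two length-two Jordan chains, we are in the $d = 2$ case of \Cref{prop:p1}, which will deliver a $4 \times 4$ matrix pencil whose vanishing determinant is \cref{eq:lnormalform}.

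First I would assemble the right and left Jordan bases $\{\mathbf{u}_{0}, \mathbf{u}_{1}, \mathbf{u}_{2}, \mathbf{u}_{3}\}$ and $\{\mathbf{v}_{0}, \mathbf{v}_{1}, \mathbf{v}_{2}, \mathbf{v}_{3}\}$. The kernel vectors $\mathbf{u}_{0}, \mathbf{u}_{2}$ arise from the phase and translation symmetries of \cref{eq:1DNLS}; the generalized eigenvectors $\mathbf{u}_{1}, \mathbf{u}_{3}$ come from differentiating the travelling-wave family with respect to the parameters $E, \kappa, \omega$ and projecting away any kernel components. The left chains follow from the Hamiltonian (skew-symmetric) factor of $\mathcal{L}$. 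Next, I would verify $\mathbf{v}_{2j+1}\mathcal{L}^{(1)}\mathbf{u}_{2k} = 0$ for all $j,k \in \{0,1\}$, which is the non-resonance hypothesis of \Cref{prop:p1}. Given the block form of $\mathcal{L}^{(1)}$ and the $T_{*}$-periodicity of the chain vectors, this reduces to an integration by parts in which the boundary terms cancel and the bulk terms vanish after using \cref{eq:NLSSode}.

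The core of the proof is evaluating the three $2 \times 2$ matrices $\mathbf{M}^{(2)}, \mathbf{M}^{(1)}, \mathbf{M}^{(0)}$. The entries $\mathbf{v}_{2j+1}\mathbf{u}_{2k+1}$ and $\mathbf{v}_{2j}\mathcal{L}^{(1)}\mathbf{u}_{2k} + \mathbf{v}_{2j+1}\mathcal{L}^{(1)}\mathbf{u}_{2k+1}$ reduce, after pairing against the parameter-derivative structure of the generalized eigenvectors, to derivatives of the classical action $K$ and of $T_{*}, \eta, M$ from \cref{eq:TNLS,eq:etaNLS,eq:MNLS}; these are precisely the $a_{2}, b_{2}, d_{2}$ and $a_{1}, b_{1}, d_{1}$ recorded in \Cref{appendix:app1}. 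The hard part will be $\mathbf{M}^{(0)}$, whose entries contain $\mathbf{v}_{2j+1}\mathcal{L}^{(1)}\mathcal{L}^{-1}\mathcal{L}^{(1)}\mathbf{u}_{2k}$ and therefore require inverting $\mathcal{L}$ on the range of $\mathcal{L}^{(1)}$ acting on its kernel. This inversion is well-posed precisely because of the non-resonance condition already verified, and I would carry it out by identifying the parameter-derivative of the travelling-wave profile which solves $\mathcal{L}\mathbf{w} = \mathcal{L}^{(1)}\mathbf{u}_{2k}$ explicitly, then using the Poisson-bracket identities of \Cref{appendix:app1} to collapse the resulting double integrals into $a_{0}, b_{0}, d_{0}$.

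Finally, the normal form \cref{eq:lnormalform} is assembled by substituting $\lambda = \lambda_{1}\mu + O(\mu^{2})$ from \Cref{prop:p1} into $\det(\lambda_{1}^{2}\mathbf{M}^{(2)} + \lambda_{1}\mathbf{M}^{(1)} + \mathbf{M}^{(0)}) = 0$, rewriting the relation in the $(\lambda,\mu)$ variables by clearing the $\mu^{4}$ factor, and absorbing the higher-order corrections of the expansion in \Cref{prop:p1} into the $O(5)$ remainder whose mixed terms satisfy $i,j > 0$ and $i + j \geq 5$.
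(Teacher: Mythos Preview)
Your proposal is correct and follows the same route the paper indicates: the paper does not supply a self-contained proof of \Cref{th:t2} but simply identifies $\mathcal{L}^{(1)}$ and $\mathcal{L}^{(2)}$ and states that applying \Cref{prop:p1} with the Jordan data of \Cref{th:t1} yields the result, deferring the computation of the matrices $\mathbf{M}^{(i)}$ to \cite{LBJM2019}. Your outline fleshes out exactly that application, so there is no methodological difference to report.
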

	\begin{corollary}[Modulational instability criterion]\label{cor:c1}
		If any of the roots $ \lambda $ of \cref{eq:lnormalform} are not purely imaginary, then the travelling wave $ \phi $ about which we have linearized is modulationally unstable.
	\end{corollary}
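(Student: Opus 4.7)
The plan is to derive the corollary directly from \Cref{th:t2} together with the Hamiltonian symmetry of the spectral problem \cref{eq:HamiltonianProblem}. First I would recall the working definition: modulational instability of the underlying periodic travelling wave means that arbitrarily close to the origin of the spectral plane there is a point of the $L^{2}(\R)$ spectrum of $\mathcal{L}$ with strictly positive real part. By the Floquet decomposition already set up before \Cref{th:t2}, this spectrum is the union, over $\mu\in(-\pi/T_{*},\pi/T_{*}]$, of the periodic spectrum of $\mathcal{L}(\mu)$, so it suffices to exhibit such a point among the four small branches $\lambda(\mu)$ parametrised by $\mu$ near $0$.

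Second, I would invoke \Cref{th:t2}, which asserts that these four branches are precisely the roots of the normal form \cref{eq:lnormalform}, expanded as $\lambda(\mu)=\lambda_{1}\mu+O(\mu^{2})$ where $\lambda_{1}$ solves the leading-order quartic obtained from the determinant of the $2\times 2$ pencil. Thus the question of whether the bands leave the imaginary axis is reduced, to leading order in $\mu$, to the question of whether the coefficients $\lambda_{1}$ are all purely imaginary. A straightforward estimate using the boundedness hypotheses already invoked for \Cref{prop:p1} then controls the $O(\mu^{2})$ remainder: if some $\lambda_{1}$ has non-zero real part, then $\mathrm{Re}\,\lambda(\mu)$ is of order $|\mu|$ and hence non-zero for all sufficiently small non-zero $\mu$.

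Third, I would use the Hamiltonian structure of $\mathcal{L}$, namely the factorisation of \cref{eq:HamiltonianProblem} into a skew-symmetric times a symmetric operator, to observe that the spectrum of $\mathcal{L}(\mu)$ is symmetric under both complex conjugation and negation: if $\lambda$ is a root of \cref{eq:lnormalform} then so are $-\lambda$, $\overline{\lambda}$ and $-\overline{\lambda}$. In particular, the existence of any root that fails to be purely imaginary forces the existence of a root with strictly positive real part, and combined with the previous step this produces an $L^{2}$-spectral point of $\mathcal{L}$ with positive real part arbitrarily close to the origin.

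The only subtle step is the perturbative control of the higher-order terms $O(5)$ in \cref{eq:lnormalform}; since the leading quartic has simple roots generically (as the two genericity conditions $\sigma\neq 0$ and $D\neq 0$ of \Cref{th:t1} prevent coalescence of the four branches at $\mu=0$), the implicit function theorem applied to the characteristic polynomial in the variable $\lambda/\mu$ yields that the full roots inherit the non-vanishing real part of their leading approximations. This is the place where, in a fully rigorous write-up, one should verify that the genericity hypothesis excludes a double root of the leading quartic on the imaginary axis that could absorb the real part into the remainder; everything else in the argument is essentially bookkeeping on top of \Cref{th:t2}.
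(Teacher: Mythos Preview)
Your argument is correct and supplies exactly the reasoning that the paper leaves implicit: in the paper, \Cref{cor:c1} is stated immediately after \Cref{th:t2} with no proof whatsoever, the authors evidently regarding it as a direct restatement of what it means for the normal form \cref{eq:lnormalform} to govern the spectrum near the origin. Your explicit invocation of the Hamiltonian four-fold symmetry of \cref{eq:HamiltonianProblem} (so that a root off the imaginary axis forces one with strictly positive real part) and your attention to the $O(\mu^{2})$ remainder go beyond what the paper writes down, but they are precisely the steps one would need to turn the corollary from a heuristic into a rigorous statement. The one caveat you already flag---that the genericity conditions $\sigma\neq 0$ and $D\neq 0$ do not by themselves rule out a repeated purely imaginary root of the leading quartic, which could in principle let the $O(5)$ terms absorb the real part---is genuine, and the paper does not address it either; in that degenerate sub-case a separate argument would indeed be required.
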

	\section{Whitham modulation theory}\label{sec:s31}
	Our goal in this section is to provide a formal Whitham theory calculation which results in the same criterion for modulational instability as the rigorous results in the previous section, in particular \Cref{th:t2} and \Cref{cor:c1}. We want to show that the equation for the characteristics of the Whitham modulation equations is equivalent to the normal form \cref{eq:lnormalform}. There are several ways to derive the Whitham modulation equations; we choose to use the averaged Lagrangian formulation and the variational principle from Whitham's work \cite{Whi1965B,Whi1970,Whi1999}, since the averaged Lagrangian closely resembles the classical action $ K $ (cf. \cref{eq:classicalAction}) from the rigorous linear theory.\par
	As a side note, many of the symbols of the previous section will be re-used but with the subscript $ * $. These subscripted symbols are seen as completely new, and not connected to their un-subscripted counterparts. We start with the Lagrangian of \cref{eq:1DNLS}:
	\begin{align}
	L(\psi_{t},\psi_{x},\psi,\overline{\psi}_{t},\overline{\psi}_{x},\overline{\psi}) = i(\overline{\psi}\psi_{t} - \psi\overline{\psi}_{t}) + 2\lvert\psi_{x}\rvert^{2} - 2\zeta F(\lvert\psi\rvert^{2}), \label{eq:nlsLagrangian}
	\end{align}
	which satisfies the Euler-Lagrange equations:
	\begin{align*}
	L_{\psi} - \partial_{t}L_{\psi_{t}} - \partial_{x}L_{\psi_{x}} &= 0\\
	L_{\overline{\psi}} - \partial_{t}L_{\overline{\psi}_{t}} - \partial_{x}L_{\overline{\psi}_{x}} &= 0.
	\end{align*}
	We seek solutions to \cref{eq:1DNLS} of the form:
	\begin{align*}
	\psi(x,t) = r(x,t)e^{i\varphi(x,t)}.
	\end{align*}
	Upon subsitution, we have (for imaginary and real parts respectively)
	\begin{align}
	r_{t} &= 2\varphi_{x}r_{x}+\varphi_{xx}r \label{eq:imParts}\\
	-\varphi_{t}r &= r_{xx} - \varphi_{x}^{2}r + \zeta f(r^{2})r, \label{eq:reParts}
	\end{align}
	with the Lagrangian:
	\begin{align}
	L(r_{x},r,\varphi_{t},\varphi_{x}) = -2\varphi_{t}r^{2} + 2(r_{x})^{2} + 2(\varphi_{x})^{2}r^{2} - 2\zeta F(r^{2}) \label{eq:polarLagrangianWhitham}.
	\end{align}
	We now seek (quasi)periodic solutions $ r,\varphi $, with period normalized to $ 2\pi $. Introducing the variable $ \theta = kx - \omega_{*}t $, we let
	\begin{align*}
	r(x,t) &= R(\theta)\\
	\varphi(x,t) &= \beta x - \gamma_{*} t + \Phi(\theta).
	\end{align*}
	The pseudo-phase component $ \beta x - \gamma_{*} t $ is a necessary generalization since only the derivatives of $ \varphi $ appear in $ L $ (cf. \cref{eq:polarLagrangianWhitham}). Consequently, the pseudo-phase ensures that the quantities $ \varphi_{x} $ and $ \varphi_{t} $ have mean $ \beta $ and $ -\gamma_{*} $ respectively when averaged over one period. This is reminiscent of the quasi-periodic boundary conditions set for $ \psi $ in \cref{eq:qpL}. Substituting into \cref{eq:imParts,eq:reParts,eq:polarLagrangianWhitham} yields:
	\begin{align}
	&-\omega_{*} R_{\theta} = 2kR_{\theta}(\beta + k\Phi_{\theta}) + k^{2}\Phi_{\theta\theta}R \label{eq:NLSWhitham1}\\
	&R(\gamma_{*} + \omega_{*} \Phi_{\theta}) = k^{2}R_{\theta\theta} - R(\beta + k\Phi_{\theta})^{2} + \zeta f(R^{2})R \label{eq:NLSWhitham2}\\
	&L(kR_{\theta},R,-\gamma - \omega_{*}\Phi_{\theta},\beta + k\Phi_{\theta}) = 2R^{2}(\gamma_{*} + \omega_{*}\Phi_{\theta}) + 2k^{2}R_{\theta}^{2} + 2R^{2}(\beta + k\Phi_{\theta})^{2} - 2\zeta F(R^{2}). \label{eq:WhithamLagrangian}
	\end{align}
	For a slow modulation, we consider the the parameters $ \omega_{*},\; k,\;\gamma_{*},\;\beta $ to be functions of $ X = \epsilon x$ and $ T = \epsilon t $. We write:
	\begin{align}
	\theta = \epsilon^{-1}\Theta(X,T),\quad \beta x - \gamma_{*} t = \widetilde{\theta} = \epsilon^{-1}\widetilde{\Theta}(X,T), \label{eq:twotiming1}
	\end{align}
	and define:
	\begin{align}
	-\omega_{*}(X,T) = \Theta_{T},\quad k(X,T) = \Theta_{X},\quad -\gamma_{*}(X,T) = \widetilde{\Theta}_{T},\quad \beta(X,T) = \widetilde{\Theta}_{X}.
	\end{align}
	Our functions $ R $ and $ \Phi $ are also written in these variables as:
	\begin{align*}
	R = R(\theta,X,T;\epsilon),\quad \Phi = \Phi(\theta,X,T;\epsilon).
	\end{align*}
	The fact that these functions are evolving on both slow and fast scales is called two-timing, and is closely examined in \cite{Whi1970,Whi1999}. When averaging the Lagrangian, we consider $ R $ and $ \Phi $ to be functions of three independent variables $ \theta,\;X,\;T $, even though $ \theta $ is a function of $ X $ and $ T $ in \cref{eq:twotiming1}. This extra flexibility ensures that secular terms in the asymptotic expansions are suppressed, and in conjunction with the variational principle this is equivalent to a WKB approximation \cite{Whi1999}. According to Whitham's theory \cite{Whi1999,Whi1970}, the leading order modulation equations can be derived from the variational principle:
	\begin{align}
	\delta V(R_{\theta},R,\Phi_{\theta},\Theta,\widetilde{\Theta}) = \delta \int\int\frac{1}{2\pi}\int_{0}^{2\pi}L(kR_{\theta},R,-\gamma_{*} - \omega_{*}\Phi_{\theta},\beta + k\Phi_{\theta})d\theta dXdT = 0. \label{eq:varprinciple}
	\end{align}
	For functions $ h $ which vanish on the $ (\theta,X,T) $ boundary, \cref{eq:varprinciple} means that, for variations in $ R $ we have:
	\begin{align*}
	V(R_{\theta} + h_{\theta},R + h,\Phi_{\theta},\Theta,\widetilde{\Theta}) - V(R_{\theta},R,\Phi_{\theta},\Theta,\widetilde{\Theta}) &= 0\\
	\implies \int\int\frac{1}{2\pi}\int_{0}^{2\pi}h_{\theta}L_{R_{\theta}} + hL_{R} + O(h^{2})d\theta dXdT &= 0.\\
	\end{align*}
	Integrating by parts, we end up with
	\begin{align*}
	\int\int\frac{1}{2\pi}\int_{0}^{2\pi}h(L_{R} - \partial_{\theta}L_{R_{\theta}})d\theta dXdT &= 0.
	\end{align*}
	Since this is true for all such $ h $, we conclude that:
	\begin{align}
	\partial_{\theta}L_{R_{\theta}} - L_{R} = 0. \label{eq:varR}
	\end{align}
	\Cref{eq:varR} is simply \cref{eq:NLSWhitham2}, one of the Euler-Lagrange equations for $ L $ (cf. \cref{eq:WhithamLagrangian}). The other Euler-Lagrange equation comes from considering variations in $ \Phi $, which yields:
	\begin{align}
	\partial_{\theta}L_{\Phi_{\theta}} = 0. \label{eq:varPhi}
	\end{align}
	\Cref{eq:varPhi} has an immediate first integral:
	\begin{align}
	B(X,T) = L_{\Phi_{\theta}}, \label{eq:ELB}
	\end{align}
	so $ B $ is constant with respect to $ \theta $, however it is now added to the ensemble of slowly-varying parameters. \Cref{eq:varPhi} is in fact a multiple of \cref{eq:NLSWhitham1} (the factor is $ 4R $), so finding $ B $ is equivalent to finding a first integral of \cref{eq:NLSWhitham1}. \Cref{eq:varR} also admits an integral, since this equation is an ODE in the variable $ \theta $. To see this, we multiply \cref{eq:varR} by $ R_{\theta} $:
	\begin{align*}
	R_{\theta}\partial_{\theta}L_{R_{\theta}} - R_{\theta}L_{R} &= 0\\
	\implies \partial_{\theta}(R_{\theta}L_{R_{\theta}}) - R_{\theta\theta}L_{R_{\theta}} - R_{\theta}L_{R} &= 0\\
	\implies \partial_{\theta}(R_{\theta}L_{R_{\theta}}) - \partial_{\theta}L + B\Phi_{\theta\theta} &= 0.
	\end{align*}
	Note that we have used \cref{eq:varPhi} to write the $ \theta $-derivative of $ L $ as:
	\begin{align*}
	\partial_{\theta}L &= R_{\theta\theta}L_{R_{\theta}} + R_{\theta}L_{R} + B\Phi_{\theta\theta}.
	\end{align*}
	Integrating, we have:
	\begin{align}
	R_{\theta}L_{R_{\theta}} + B\Phi_{\theta} - L &= A_{*}(X,T), \label{eq:ELA}
	\end{align}
	for a slowly-varying parameter $ A_{*} $. For variations in $ \Theta $, we consider $ \Theta(X,T) + h(X,T) $ for an appropriate function $ h $ vanishing on the $ (X,T) $ boundary. This means that $ k $ will be replaced by $ k + h_{X} $, and similarly $ \omega_{*} $ by $ \omega - h_{T} $.
	Writing the averaged Lagrangian as
	\begin{align}
	\overline{L}(\omega_{*},k,\gamma_{*},\beta) = \frac{1}{2\pi}\int_{0}^{2\pi}Ld\theta, \label{eq:avgLagrange}
	\end{align}
	we have
	\begin{align*}
	V(R_{\theta},R,\Phi_{\theta},\Theta + h,\widetilde{\Theta}) - V(R_{\theta},R,\Phi_{\theta},\Theta,\widetilde{\Theta}) &= 0\\
	\implies \int\int\overline{L}(\omega_{*} - h_{T},k + h_{X},\gamma_{*},\beta) - \overline{L}(\omega_{*},k,\gamma_{*},\beta) dXdT &= 0,
	\end{align*}
	and hence:
	\begin{align}
	\int\int h_{X}\overline{L}_{k} - h_{T}\overline{L}_{\omega_{*}} + O(h^{2})dXdT &= 0. \label{eq:int1}
	\end{align}
	From \cref{eq:int1}, we have:
	\begin{align*}
	\int\int h(\partial_{T}\overline{L}_{\omega_{*}} - \partial_{X}\overline{L}_{k})dXdT &= 0,
	\end{align*}
	which is true for all such $ h $, and so we conclude that:
	\begin{align}
	\partial_{T}\overline{L}_{\omega_{*}} - \partial_{X}\overline{L}_{k} = 0.\label{eq:mod1}
	\end{align}
	\Cref{eq:mod1} is called a modulation equation since the quantities involved are functions of the slowly-modulated variables $ X $ and $ T $. Similarly for variations in $ \widetilde{\Theta} $, we have another modulation equation:
	\begin{align}
	\partial_{T}\overline{L}_{\gamma_{*}} - \partial_{X}\overline{L}_{\beta} = 0.
	\end{align}
	The last two modulation equations are associated with the conservation of waves:
	\begin{align}
	\partial_{T}k + \partial_{X}\omega_{*} &= 0 \label{eq:mod3}\\
	\partial_{T}\beta + \partial_{X}\gamma_{*} &= 0 \label{eq:mod4},
	\end{align}
	which is a consequence of requiring the mixed partial derivatives of $ \Theta(X,T) $ and $ \widetilde{\Theta}(X,T) $ to be equal. At this point, we have derived four modulation equations in the four parameters $ (\omega_{*},k,\gamma_{*},\beta) $. In order to evaluate the averaged Lagrangian \cref{eq:avgLagrange}, we need to solve \cref{eq:ELB,eq:ELA} for $ R,\Phi $. Instead, Whitham proposes using the first integrals $ A_{*}(X,T) $ and $ B_{*}(X,T) $ to simplify the form of the averaged Lagrangian \cite{Whi1999}. We can avoid directly computing $ R,\Phi $, and moreover we can absorb the dispersion relations into one variational principle. To this end, we consider a Hamiltonian transformation with the variables:
	\begin{align}
	\Pi_{1} = L_{R_{\theta}},\quad \Pi_{2} = L_{\Phi_{\theta}} = B. \label{eq:momenta}
	\end{align}
	These are generalized momenta, and so we apply a Legendre transform to $ L $ in order to eliminate $ R_{\theta} $ and $ \Phi_{\theta} $ from the equations:
	\begin{align}
	H(\Pi_{1},\Pi_{2},R,\Phi;\omega_{*},k,\gamma_{*},\beta) = R_{\theta}\Pi_{1} + \Phi_{\theta}\Pi_{2} - L. \label{eq:WhithamHamiltonian}
	\end{align}
	We see from the transformation that
	\begin{align}
	R_{\theta} = \partial_{\Pi_{1}}H,\quad \Phi_{\theta} = \partial_{\Pi_{2}}H, \label{eq:H1}
	\end{align}
	and it follows from \cref{eq:varR} and \cref{eq:varPhi} respectively that
	\begin{align}
	\partial_{\theta}\Pi_{1} = -\partial_{R}H,\quad \partial_{\theta}\Pi_{2} = -\partial_{\Phi}H = 0. \label{eq:H2}
	\end{align}
	\Cref{eq:H1,eq:H2} are Hamilton's equations. We can now write the earlier variational principle \cref{eq:varprinciple} as:
	\begin{align}
	\delta\int\int\overline{L}dXdT &= 0, \label{eq:avgvarprinciple}
	\end{align}
	where
	\begin{align}
	\overline{L} &= \frac{1}{2\pi}\int_{0}^{2\pi}(R_{\theta}\Pi_{1} + B\Phi_{\theta} - H)d\theta \label{eq:avgL}\\
	&= \frac{1}{2\pi}\int_{0}^{2\pi}(R_{\theta}\Pi_{1}- H)d\theta,
	\end{align}
	since $ \Phi $ is $ 2\pi $-periodic. The averaged Lagrangian is now a function of $ H,\; B $ and the previous four parameters. We note that Hamilton's \cref{eq:H1,eq:H2} follow from the independent variations of $ \Pi_{1},R,\Pi_{2},\Phi $ in \cref{eq:avgL}. We use this extension as Whitham describes in \cite{Whi1970,Whi1999}. Next, we observe that \cref{eq:WhithamHamiltonian} is the same as \cref{eq:ELA}, so we identify:
	\begin{align}
	H(\Pi_{1},\Pi_{2},R;\omega_{*},k,\gamma_{*},\beta) = A_{*}(X,T). \label{eq:HA}
	\end{align}
	The stationary values of \cref{eq:avgL} also satisfy \cref{eq:HA}, meaning that we can restrict the stationary values of \cref{eq:avgL} to the class of functions $ R,\;\Phi,\;\Pi_{1},\;\Pi_{2} $ which satisfy \cref{eq:HA}. Importantly, this is the only restriction we make; using the dispersion relation or any information about the forms of the solutions $ \Pi_{1} $ and $ R_{\theta} $ (\cref{eq:H1}) would result in $ \overline{L} $ having no variation. We relabel $ H(\Pi_{1},\Pi_{2},R,\Phi;\omega_{*},k,\gamma_{*},\beta) $ as $ H(X,T) $ in \cref{eq:avgL}, and we solve \cref{eq:HA} for $ \Pi_{1}(R;H,\omega_{*},k,B,\gamma_{*},\beta) $ which yields:
	\begin{align}
	\mathscr{L}(H,\omega_{*},k,B,\gamma_{*},\beta) = \frac{1}{2\pi}\oint\Pi_{1}dR - H,\label{eq:averageLagrangian}
	\end{align}
	where the integral is taken around the orbit of $ R $. The variational principle \cref{eq:avgvarprinciple} can now be written as
	\begin{align}
	\delta\int\int\mathscr{L}(H,\omega_{*},k,B,\gamma_{*},\beta)dXdT = 0. \label{eq:modVarPrinciple}
	\end{align}
	As mentioned earlier, we have now added the two parameters $ H $ and $ B $, however we have exchanged the variational equations \cref{eq:H1,eq:H2} for variations in $ H $ and $ B $:
	\begin{align}
	\mathscr{L}_{H} &= 0 \label{eq:varH}\\
	\mathscr{L}_{B} &= 0 \label{eq:varB}.
	\end{align}
	\Cref{eq:varH,eq:varB} are relations between the parameters of the periodic wavetrain, and so they are in fact the dispersion relations. Our aim is to use these equations in order to eliminate two of the six parameters from the four modulation equations, resulting in a homogeneous system of first-order PDEs. Using \cref{eq:modVarPrinciple,eq:mod3,eq:mod4}, we now have a complete picture of the Whitham theory:
	\begin{align}
	\partial_{T}\mathscr{L}_{\omega_{*}} - \partial_{X}\mathscr{L}_{k} &= 0 \label{eq:modwk}\\
	\partial_{T}\mathscr{L}_{\gamma_{*}} - \partial_{X}\mathscr{L}_{\beta} &= 0 \label{eq:modgb}\\
	\partial_{T}k + \partial_{X}\omega_{*} &= 0 \label{eq:modc1}\\
	\partial_{T}\beta + \partial_{X}\gamma_{*} &= 0. \label{eq:modc2}
	\end{align}
	\subsection{Whitham theory applied to NLS}\label{sec:s32}
	In this subsection, we apply the more general theory and observations from \cref{sec:s31} to \cref{eq:1DNLS}. We provide a direct computation of the characteristics of the Whitham modulation equations, from which we conclude that the modulational instability criterion from the Whitham theory agrees with the spectral analysis in \cref{sec:s2}. For the sake of brevity, we include more detailed calculations in \cref{appendix:app2,appendix:app3,appendix:app4,appendix:app5,appendix:app6}.\par
	We calculate $ \mathscr{L} $ in terms of the parameters $ H,\omega_{*},k,B,\gamma_{*}\beta $ using \cref{eq:NLSWhitham1,eq:NLSWhitham2,eq:WhithamLagrangian}. Firstly, $ \Pi_{2} = B $ amounts to taking an integral of \cref{eq:NLSWhitham1}, which yields:
	\begin{align}
	B = 2\omega_{*}R^{2} + 4kR^{2}(\beta + k\Phi_{\theta}). \label{eq:pi2B}
	\end{align}
	Next, we calculate $ H $ using \cref{eq:WhithamHamiltonian}, eliminating $ \Phi_{\theta} $ and $ R_{\theta} $ via \cref{eq:pi2B} and $ \Pi_{1} = L_{R_{\theta}} = 4k^{2}R_{\theta} $:
	\begin{align*}
	H &= \frac{\Pi_{1}^{2}}{4k^{2}} + B\Phi_{\theta} - \frac{\Pi_{1}^{2}}{8k^{2}} - 2R^{2}(\gamma_{*} + \omega_{*}\Phi_{\theta}) - 2R^{2}(\beta + k\Phi_{\theta})^{2} + 2\zeta F(R^{2})\\
	&= \frac{\Pi_{1}^{2}}{8k^{2}} - \frac{\beta B}{k} - \frac{\omega_{*} B}{2k^{2}} + 2R^{2}\left(\frac{\beta\omega_{*}}{k} + \frac{\omega_{*}^{2}}{4k^{2}} - \gamma_{*}\right) + \frac{B^{2}}{8k^{2}R^{2}} + 2\zeta F(R^{2})\\
	\implies \frac{\Pi_{1}^{2}}{8k^{2}} &= H + \frac{\beta B}{k} + \frac{\omega_{*} B}{2k^{2}} - 2R^{2}\left(\frac{\beta\omega_{*}}{k} + \frac{\omega_{*}^{2}}{4k^{2}} - \gamma_{*}\right) - \frac{B^{2}}{8k^{2}R^{2}} - 2\zeta F(R^{2}).
	\end{align*}
	This can also be written as:
	\begin{align}
	2k^{2}R_{\theta}^{2} = H + \frac{\beta B}{k} + \frac{\omega_{*} B}{2k^{2}} - 2R^{2}\left(\frac{\beta\omega_{*}}{k} + \frac{\omega_{*}^{2}}{4k^{2}} - \gamma_{*}\right) - \frac{B^{2}}{8k^{2}R^{2}} - 2\zeta F(R^{2}), \label{eq:NLSRode}
	\end{align}
	which we identify as the first integral of \cref{eq:NLSWhitham2} once $ \Phi_{\theta} $ has been eliminated, i.e:
	\begin{align}
	k^{2}R_{\theta\theta} = -R\left(\frac{\beta\omega_{*}}{k} + \frac{\omega_{*}^{2}}{4k^{2}} - \gamma_{*}\right) + \frac{B^{2}}{16k^{2}R^{3}} - \zeta f(R^{2})R. \label{eq:WhithamNLS2ndorder}
	\end{align}
	The averaged Lagrangian is hence:
	\begin{align}
	\mathscr{L} = \frac{k\sqrt{2}}{\pi}\oint\sqrt{H + \frac{\beta B}{k} + \frac{\omega_{*} B}{2k^{2}} - 2R^{2}\left(\frac{\beta\omega_{*}}{k} + \frac{\omega_{*}^{2}}{4k^{2}} - \gamma_{*}\right) - \frac{B^{2}}{8k^{2}R^{2}} - 2\zeta F(R^{2})}dR - H. \label{eq:ml}
	\end{align}
	Following Whitham's examples \cite{Whi1965A,Whi1999}, we introduce a function $ W $ which is essentially the classical action:
	\begin{align}
	W(H,\omega_{*},k,B,\gamma_{*},\beta) &= \frac{1}{2\pi}\oint\frac{\Pi_{1}}{2k}dR\\
	&= \frac{\sqrt{2}}{2\pi}\oint\sqrt{H + \frac{\beta B}{k} + \frac{\omega_{*} B}{2k^{2}} - 2R^{2}\left(\frac{\beta\omega_{*}}{k} + \frac{\omega_{*}^{2}}{4k^{2}} - \gamma_{*}\right) - \frac{B^{2}}{8k^{2}R^{2}} - 2\zeta F(R^{2})}dR. \label{eq:W}
	\end{align}
	This allows us to write \cref{eq:ml} as:
	\begin{align}
	\mathscr{L} = 2kW - H. \label{eq:ml1}
	\end{align}
	For a generic evaluation of $ W $ from \cref{eq:W}, we require assumptions similar to those listed in \cref{def:d1}. We derive these assumptions by equating our periodic solution $ \psi(x,t) = R(\theta)\exp(i(\beta x - \gamma_{*} t + \Phi(\theta))) $ with the periodic solution \cref{eq:NLSphi}, which yields (once translation invariance and phase invariance are taken into account):
	\begin{align}
	R(\theta) &= A(y) \label{eq:equating1}\\
	\beta x - \gamma_{*} t + \Phi(\theta) &= \left(\omega - \frac{c^{2}}{4}\right)t + \frac{cy}{2} + S(y) \label{eq:equating2}
	\end{align}
	Note that we have redefined $ \omega $ as $ \omega - \frac{c^{2}}{4} $ as suggested by \cref{eq:reducedpsi}. Taking $ x $ and $ t $ derivatives of \cref{eq:equating1}, we have:
	\begin{align}
	kR_{\theta} &= A_{y},\quad -\omega_{*}R_{\theta} = cA_{y} \label{eq:eqRA}\\
	\implies c &= -\frac{\omega_{*}}{k}. \label{eq:cU}
	\end{align}
	Similarly for \cref{eq:equating2} we have:
	\begin{align}
	\beta + k\Phi_{\theta} &= \frac{c}{2} + S_{y},\quad -\gamma_{*} - \omega_{*}\Phi_{\theta} = \omega - \frac{c^{2}}{4} + \frac{c^{2}}{2} + cS_{y}\label{eq:eqPhiS}\\
	\implies \omega &=\frac{\beta\omega_{*}}{k} + \frac{\omega_{*}^{2}}{4k^{2}} - \gamma_{*}. \label{eq:omega}
	\end{align}
	Substituting \cref{eq:eqRA,eq:cU,eq:eqPhiS} into \cref{eq:NLSSode} yields:
	\begin{align*}
	\beta + \frac{\omega_{*}}{2k} + k\Phi_{\theta} &= \frac{\kappa}{R^{2}},
	\end{align*}
	and recalling \cref{eq:pi2B} we identify:
	\begin{align}
	\kappa = \frac{B}{4k}. \label{eq:kappa}
	\end{align}
	Finally, using \cref{eq:equating1,eq:eqRA,eq:cU,eq:omega,eq:kappa}, we see that \cref{eq:NLSRode} is twice \cref{eq:NLSode}, with:
	\begin{align}
	E &= \frac{1}{4}\left(H + \frac{\beta B}{k} + \frac{\omega_{*}B}{2k^{2}}\right). \label{eq:E}
	\end{align}
	\Cref{eq:cU,eq:omega,eq:kappa,eq:E} now express the parameters of the linear theory in terms of our Whitham parameters. Recalling \cref{def:d1}, we have that the parameters $ (H,\omega_{*},k,B,\gamma_{*},\beta) $ exist such that:
	\begin{itemize}
		\item $ \frac{B}{k} > 0 $;\\
		\item The function $ P(R) = H + \frac{\beta B}{k} + \frac{\omega_{*} B}{2k^{2}} - 2R^{2}\big(\frac{\beta\omega_{*}}{k} + \frac{\omega_{*}^{2}}{4k^{2}} - \gamma_{*}\big) - \frac{B^{2}}{8k^{2}R^{2}} - 2\zeta F(R^{2}) $ has two positive, real, simple roots $ R_{-} = a_{-} $ and $ R_{+} = a_{+} $, with $ R_{-} < R_{+} $ and $ P(R)  $ real and positive for $ R \in (R_{-},R_{+}) $.
	\end{itemize}
	We make the change of variables:
	\begin{align*}
	U &= \frac{\omega_{*}}{k}\\
	J &= \frac{B}{k}.
	\end{align*}
	This eliminates the explicit dependence of $ P(R) $ on $ k $, which will be advantageous when we calculate the characteristics of the modulation equations. Also note that the two roots $ R_{-},\;R_{+} $ are now functions of $ H,U,J,\gamma_{*},\beta,\zeta $. Updating $ W $ in \cref{eq:W}, we have:
	\begin{align}
	W(H,U,J,\gamma_{*},\beta,\zeta) = \frac{\sqrt{2}}{\pi}\int_{R_{-}}^{R_{+}}\sqrt{H + \beta J + \frac{UJ}{2} - 2R^{2}\left(\beta U + \frac{U^{2}}{4} - \gamma_{*}\right) - \frac{J^{2}}{8R^{2}} - 2\zeta F(R^{2})}dR. \label{eq:finalW}
	\end{align}
	Moreover, we can relate the classical action from \cref{eq:classicalAction} with $ W $:
	\begin{align}
	K = \pi W.\label{eq:KW}
	\end{align}
	We now calculate the derivatives of $ W $ with respect to the parameters:
	\begin{align}
	W_{H} &= \frac{\sqrt{2}}{\pi}\int_{R_{-}}^{R_{+}}\frac{1}{2\sqrt{H + \beta J + \frac{UJ}{2} - 2R^{2}\left(\beta U + \frac{U^{2}}{4} - \gamma_{*}\right) - \frac{J^{2}}{8R^{2}} - 2\zeta F(R^{2})}}dR \label{eq:WH}\\
	W_{J} &= W_{H}\left(\beta + \frac{U}{2}\right) - \eta_{*} \label{eq:WJ}\\
	\eta_{*} &\coloneqq \frac{\sqrt{2}}{\pi}\int_{R_{-}}^{R_{+}}\frac{J}{8R^{2}\sqrt{H + \beta J + \frac{UJ}{2} - 2R^{2}\left(\beta U + \frac{U^{2}}{4} - \gamma_{*}\right) - \frac{J^{2}}{8R^{2}} - 2\zeta F(R^{2})}}dR \label{eq:eta}\\
	M_{*} &\coloneqq W_{\gamma_{*}} = \frac{\sqrt{2}}{\pi}\int_{R_{-}}^{R_{+}}\frac{R^{2}}{\sqrt{H + \beta J + \frac{UJ}{2} - 2R^{2}\left(\beta U + \frac{U^{2}}{4} - \gamma_{*}\right) - \frac{J^{2}}{8R^{2}} - 2\zeta F(R^{2})}}dR \label{eq:M}\\
	W_{U} &= \frac{J}{2}W_{H} - M_{*}\left(\beta + \frac{U}{2}\right) \label{eq:WU}\\
	W_{\beta} &= JW_{H} - UM_{*} \label{eq:Wbeta}\\
	W_{\zeta} &= -\frac{\sqrt{2}}{\pi}\int_{R_{-}}^{R_{+}}\frac{F(R^{2})}{\sqrt{H + \beta J + \frac{UJ}{2} - 2R^{2}\left(\beta U + \frac{U^{2}}{4} - \gamma_{*}\right) - \frac{J^{2}}{8R^{2}} - 2\zeta F(R^{2})}}dR. \label{eq:Wzeta}
	\end{align}
	We choose the notation $ \eta_{*} $ and $ M_{*} $ because these integrals are closely related to the $ \eta $ and $ M $ from the linear theory. We give the relations in \cref{appendix:app2}. Using \cref{eq:ml1}, we have:
	\begin{align*}
	\mathscr{L}_{\omega_{*}} &= 2W_{U}\\
	\mathscr{L}_{k} &= 2W - 2UW_{U} - 2JW_{J}\\
	\mathscr{L}_{\gamma_{*}} &= 2kW_{\gamma_{*}}\\
	\mathscr{L}_{\beta} &= 2kW_{\beta}
	\end{align*}
	We are now in a position to write the modulation equations in terms of the derivatives of $ W $. For \cref{eq:modwk,eq:modgb} we have:
	\begin{align}
	\partial_{T}W_{U} + U\partial_{X}W_{U} + J\partial_{X}W_{J} + W_{U}U_{X} + W_{J}J_{X} - \partial_{X}W &= 0 \label{eq:newmod1}\\
	\partial_{T}\left(2kW_{\gamma_{*}}\right) - \partial_{X}\left(2kW_{\beta}\right) &= 0. \label{eq:newmod2}
	\end{align}
	We now seek to use the variational equations \cref{eq:varH,eq:varB} to eliminate two parameters from the Whitham system. Using \cref{eq:varH} with \cref{eq:ml1} we have:
	\begin{align}
	\mathscr{L}_{H} &= 0\\
	\implies W_{H} &= \frac{1}{2k}. \label{eq:disprel}
	\end{align}
	Similarly for \cref{eq:varB}:
	\begin{align}
	\mathscr{L}_{B} &= 0\\
	\implies W_{J} &= 0. \label{eq:Bdisprel}
	\end{align}
	Since $ W $ and its derivatives have no explicit dependence on $ k $, then \cref{eq:disprel} is in fact the dispersion relation for $ k $ in terms of the other parameters $ H,U,J,\gamma_{*},\beta,\zeta $. Substituting \cref{eq:disprel} into the modulation equations allows us to eliminate $ k $ entirely. This is particularly relevant for \cref{eq:modc1}:
	\begin{align*}
	\partial_{T}k + \partial_{X}(kU) &= 0\\
	\implies \partial_{T}\left(\frac{1}{2W_{H}}\right) + \partial_{X}\left(\frac{U}{2W_{H}}\right) &= 0,
	\end{align*}
	which simplifies to:
	\begin{align*}
	\partial_{T}W_{H} + U\partial_{X}W_{H} - W_{H}U_{X} = 0.
	\end{align*}
	\Cref{eq:newmod1} is already free of $ k $, but \cref{eq:newmod2} can be simplified using \cref{eq:disprel,eq:Wbeta,eq:M}:
	\begin{align*}
	W_{H}\partial_{T}M_{*} - M_{*}\partial_{T}W_{H} + U	W_{H}\partial_{X}M_{*} - UM_{*}\partial_{X}W_{H} + W_{H}M_{*}U_{X} - W_{H}^{2}J_{X} = 0.
	\end{align*}
	We also observe that \cref{eq:Bdisprel} can eliminate terms in \cref{eq:newmod1}. The four modulation equations are now:
	\begin{align}
	\beta_{T} + \gamma_{*X} &= 0 \label{eq:mod1Final}\\
	\partial_{T}W_{H} + U\partial_{X}W_{H} - W_{H}U_{X} &= 0 \label{eq:mod2Final}\\
	\partial_{T}W_{U} + U\partial_{X}W_{U} + W_{U}U_{X} - \partial_{X}W &= 0 \label{eq:mod3Final}\\
	W_{H}\partial_{T}M_{*} - M_{*}\partial_{T}W_{H} + U	W_{H}\partial_{X}M_{*} - UM_{*}\partial_{X}W_{H} + W_{H}M_{*}U_{X} - W_{H}^{2}J_{X} &= 0 \label{eq:mod4Final}.
	\end{align}
	Recalling the genericity condition $ \sigma \neq 0 $ from \Cref{th:t1}, we have that $ \sigma_{*} = \{W_{H},W_{J}\}_{J,H} $ defined in \cref{appendix:app3} is non-zero, which implies that the derivatives of $ W_{J} $ with respect to the parameters $ H $ and $ J $ do no vanish simultaneously. We first assume that $ W_{HJ} \neq 0 $. By the implicit function theorem, there exists a continuously differentiable function $ g $ defined on the appropriate parameter space such that:
	\begin{align}
	H = g(U,J,\gamma_{*},\beta). \label{eq:gimplicit}
	\end{align}
	Next we take derivatives of the equation $ W_{J} = 0 $:
	\begin{align*}
	\partial_{J}W_{J}(g,U,J,\gamma_{*},\beta) &= 0\\
	\implies g_{J}W_{HJ} + W_{JJ} &= 0
	\end{align*}
	Similarly for the other derivatives:
	\begin{align}
	g_{J}W_{HJ} &= -W_{JJ} \label{eq:gJ}\\
	g_{U}W_{HJ} &= -W_{UJ} \label{eq:gU}\\
	g_{\gamma_{*}}W_{HJ} &= -W_{\gamma_{*}J} \label{eq:ggamma}\\
	g_{\beta}W_{HJ} &= -W_{\beta J}. \label{eq:gbeta}
	\end{align}
	\Cref{eq:gJ,eq:gU,eq:ggamma,eq:gbeta} allow us to expand the $ X $ and $ T $ derivatives of $ W_{H} $ and $ M_{*} $ in \cref{eq:mod2Final,eq:mod3Final,eq:mod4Final} by using the chain rule. For example:
	\begin{align*}
	\partial_{T}W_{H} &= g_{T}W_{HH} + J_{T}W_{HJ} + U_{T}W_{HU} + \gamma_{*T}W_{H\gamma_{*}} + \beta_{T}W_{H\beta}\\
	&= (g_{J}J_{T} + g_{U}U_{T} + g_{\gamma_{*}}\gamma_{*T} + g_{\beta}\beta_{T})W_{HH} + J_{T}W_{HJ} + U_{T}W_{HU} + \gamma_{*T}W_{H\gamma_{*}} + \beta_{T}W_{H\beta}\\
	&= (g_{J}W_{HH} + W_{HJ})J_{T} + (g_{U}W_{HH} + W_{HU})U_{T} + (g_{\gamma_{*}}W_{HH} + W_{H\gamma_{*}})\gamma_{*T} + (g_{\beta}W_{HH} + W_{H\beta})\beta_{T}.
	\end{align*}
	If we multiply the derivative by $ W_{HJ} $, we have:
	\begin{align*}
	W_{HJ}\partial_{T}W_{H} &= (W_{HJ}^{2} - W_{HH}W_{JJ})J_{T} + (W_{HU}W_{HJ} - W_{HH}W_{UJ})U_{T} + (W_{H\gamma_{*}}W_{HJ} - W_{HH}W_{\gamma_{*}J})\gamma_{*T}\\
	&+ (W_{H\beta}W_{HJ} - W_{HH}W_{\beta J})\beta_{T}\\
	&= \{W_{H},W_{J}\}_{J,H}J_{T} + \{W_{H},W_{U}\}_{J,H}U_{T} + \{W_{H},W_{\gamma_{*}}\}_{J,H}\gamma_{*T} + \{W_{H},W_{\beta}\}_{J,H}\beta_{T}.
	\end{align*}
	Using \cref{appendix:app3}, we can identify
	\begin{align*}
	\{W_{H},W_{J}\}_{J,H} &= \sigma_{*}\\
	\{W_{H},W_{\gamma_{*}}\}_{J,H} &= -\{W_{H},W_{J}\}_{H,\gamma_{*}} = -\rho_{*},
	\end{align*}
	where we have used the properties of Poisson brackets quoted in \cref{appendix:app3}.
	\begin{remark}
		An important consideration when computing these Poisson brackets is the order of differentiation and substitution. The correct order is to compute the Poisson bracket and then substitute $ H = g(U,J,\gamma_{*},\beta) $ into the expression, rather than applying the chain rule within a Poisson bracket. This is because the Poisson brackets produce identities that hold regardless of the equations tying the parameters together; we are using them as a convenient notation for leveraging the symmetry of the mixed partial derivatives of $ W $. As an example, we make the substitution $ H = g(U,J,\gamma_{*},\beta)  $ into the identity $ \{W_{H},W_{\gamma_{*}}\}_{J,H} = -\{W_{H},W_{J}\}_{H,\gamma_{*}} = -\rho_{*} $ instead of computing $ \partial_{J}W_{H}(g,U,J,\gamma_{*},\beta) $ and other such derivatives.
	\end{remark}
	For $ \{W_{H},W_{U}\}_{J,H} $, we have:
	\begin{align*}
	\{W_{H},W_{U}\}_{J,H} &= \bigg\{W_{H},\frac{JW_{H}}{2} - \left(\beta + \frac{U}{2}\right)W_{\gamma_{*}}\bigg\}_{J,H}\\
	&= \{W_{H},\frac{JW_{H}}{2}\}_{J,H} - \left(\beta + \frac{U}{2}\right)\{W_{H},W_{\gamma_{*}}\}_{J,H}\\
	&= \nu_{*} + \left(\beta + \frac{U}{2}\right)\rho_{*},
	\end{align*}
	and similarly we calculate
	\begin{align*}
	\{W_{H},W_{\beta}\}_{J,H} = 2\nu_{*} + U\rho_{*},
	\end{align*}
	allowing us to write $ W_{HJ}\partial_{T}W_{H} $ in the following way:
	\begin{align}
	W_{HJ}\partial_{T}W_{H} = \begin{pmatrix}
	\sigma_{*} & -\rho_{*} & \nu_{*} + \left(\beta + \frac{U}{2}\right)\rho_{*} & 2\nu_{*} + U\rho_{*}
	\end{pmatrix}\begin{pmatrix}
	J_{T}\\\gamma_{*T}\\U_{T}\\\beta_{T}
	\end{pmatrix}. \label{eq:vecComp}
	\end{align}
	Repeating the process of expressing the derivatives in the modulation equations as products of vectors as in \cref{eq:vecComp}, we can write the modulation equations as a quasi-linear first order system:
	\begin{align}
	A\begin{pmatrix}
	J_{T}\\\gamma_{*T}\\U_{T}\\\beta_{T}
	\end{pmatrix} + a\begin{pmatrix}
	J_{X}\\\gamma_{*X}\\U_{X}\\\beta_{X}
	\end{pmatrix} = 0 \label{eq:qlsystem}
	\end{align}
	with
	\begin{align}
	A &= \begin{pmatrix}
	0 & 0 & 0 & 1\\
	A_{21} & A_{22} & A_{23} & A_{24}\\
	A_{31} & A_{32} & A_{33} & A_{34}\\
	A_{41} & A_{42} & A_{43} & A_{44}
	\end{pmatrix} \label{eq:matrixA}\\
	a &= \begin{pmatrix}
	0 & 1 & 0 & 0\\
	UA_{21} & UA_{22} & UA_{23} - 2\tau & UA_{24}\\
	UA_{31} + a_{31} & UA_{32} + a_{32} & UA_{33} + a_{33} & UA_{34} + a_{34}\\
	UA_{41} - 2\tau W_{H} & UA_{42} & UA_{43} + 2\tau M & UA_{44}
	\end{pmatrix} \label{eq:matrixa}
	\end{align}
	and the coefficients (calculated in \cref{appendix:app4})
	\begin{align*}
	A_{21} &= \sigma_{*}\\
	A_{22} &= -\rho_{*}\\
	A_{23} &= \nu_{*} + \rho_{*}\left(\beta +  \frac{U}{2}\right)\\
	A_{24} &= 2\nu_{*} + U\rho_{*}\\
	A_{31} &= \tau_{*} + \frac{J\sigma_{*}}{2} + \left(\beta + \frac{U}{2}\right)\Gamma\\
	A_{32} &= -\frac{J}{2}\rho_{*} - \left(\beta + \frac{U}{2}\right)\{W_{H},W_{\gamma_{*}}\}_{J,\gamma_{*}}\\
	A_{33} &= \frac{J}{2}\nu_{*} + \frac{W_{H}M_{H}}{2}\left(\beta + \frac{U}{2}\right) - \frac{M_{*}W_{HJ}}{2} + \frac{J}{2}\rho_{*}\left(\beta + \frac{U}{2}\right) + \left(\beta + \frac{U}{2}\right)^{2}\{W_{H},W_{\gamma_{*}}\}_{J,\gamma_{*}}\\
	A_{34} &= J\nu_{*} + W_{H}M_{*H}\left(\beta + \frac{U}{2}\right) - M_{*}W_{HJ} + \frac{J}{2}U\rho_{*} + U\left(\beta + \frac{U}{2}\right)\{W_{H},W_{\gamma_{*}}\}_{J,\gamma_{*}}\\
	A_{41} &= -\Gamma W_{H} - \sigma_{*}M_{*}\\
	A_{42} &= W_{H}\{W_{H},W_{\gamma_{*}}\}_{J,\gamma_{*}} + \rho_{*}M_{*}\\
	A_{43} &= -\frac{W_{H}^{2}M_{*H}}{2} -W_{H}\left(\beta + \frac{U}{2}\right)\{W_{H},W_{\gamma_{*}}\}_{J,\gamma_{*}} - M_{*}(\nu_{*} + \rho_{*}\left(\beta + \frac{U}{2}\right))\\
	A_{44} &= -W_{H}^{2}M_{*H} - UW_{H}\{W_{H},W_{\gamma_{*}}\}_{J,\gamma_{*}} -M_{*}(2\nu_{*} + U\rho_{*})\\
	a_{31} &= W_{H}W_{JJ}\\
	a_{32} &= W_{H}M_{*J} - W_{HJ}M_{*}\\
	a_{33} &= \frac{W_{H}^{2}}{2} + J\tau_{*} - W_{H}M_{*J}\left(\beta + \frac{U}{2}\right)\\
	a_{34} &= W_{H}^{2} + UM_{*}W_{HJ} - UW_{H}M_{*J}.
	\end{align*}
	If any of the characteristics of the system in \cref{eq:qlsystem} are complex, then according to Whitham's theory \cite{Whi1999}, the system is modulationally unstable. Our task is to find the characteristics of \cref{eq:qlsystem} by solving for $ X',T' $ in:
	\begin{align}
	\det(AX' - aT') = 0, \label{eq:qlcharacteristics}
	\end{align}
	where $ X' = \frac{dX}{ds},\;T' = \frac{dT}{ds} $ along a characteristic curve in the $ (X,T) $-plane parametrized by $ s $. The degenerate case $ X'=T'=0 $ in \cref{eq:qlcharacteristics} corresponds to the non-existence of $ c_{1},c_{2} $ not both zero such that
	\begin{align}
	\det(c_{1}A + c_{2}a) \neq 0. \label{eq:nondegenerate}
	\end{align}
	To demonstrate that our system indeed satisfies \cref{eq:nondegenerate}, consider:
	\begin{align*}
	\det{A} &= -W_{HJ}\tau_{*}\left(M_{*}(\Gamma M_{*H} + \rho_{*}M_{*J} + \sigma_{*}M_{*\gamma_{*}}) - \frac{W_{H}^{2}}{2}\{W_{H},W_{\gamma_{*}}\}_{H,\gamma_{*}}\right)\\
	&= -\frac{W_{HJ}\tau_{*}}{2^{9}\pi^{4}}D\\
	&\neq 0,
	\end{align*}
	where $ D $ is the non-vanishing determinant defined in \Cref{th:t1}, which has been calculated in \cref{appendix:app3} as \cref{eq:Dcalc}. Hence $ c_{2} = 0 $ and any $ c_{1} \neq 0 $ satisfies \cref{eq:nondegenerate}, so we do not need to consider the degenerate case. We now focus on computing:
	\begin{align}
	\det(AX' - aT') = \begin{vsmallmatrix}
	0 & -T' & 0 & X'\\
	A_{21}(X'-UT') & A_{22}(X'-UT') & A_{23}(X'-UT') + 2\tau_{*} T' & A_{24}(X'-UT')\\
	A_{31}(X'-UT') - a_{31}T' & A_{32}(X'-UT') - a_{32}T' & A_{33}(X'-UT') - a_{33}T' & A_{34}(X'-UT') - a_{34}T'\\
	A_{41}(X'-UT') + 2\tau_{*} W_{H}T' & A_{42}(X'-UT') & A_{43}(X'-UT') - 2\tau_{*} MT' & A_{44}(X'-UT')
	\end{vsmallmatrix}. \label{eq:bigDet}
	\end{align}
	\Cref{eq:bigDet} can be written in block form:
	\begin{align}
	\det(AX' - aT') = \begin{vmatrix}
	P_{11} & P_{12}\\
	P_{21} & P_{22}
	\end{vmatrix}, \label{eq:detBlockForm}
	\end{align}
	with
	\begin{align*}
	P_{11} &= \begin{pmatrix}
	0 & -T'\\
	A_{21}(X'-UT') & A_{22}(X'-UT') 
	\end{pmatrix}\\
	P_{12} &= \begin{pmatrix}
	0 & X'\\
	A_{23}(X'-UT') + 2\tau_{*}T' & A_{24}(X'-UT')
	\end{pmatrix}\\
	P_{21} &= \begin{pmatrix}
	A_{31}(X'-UT') - a_{31}T' & A_{32}(X'-UT') - a_{32}T'\\
	A_{41}(X'-UT') + 2\tau_{*}W_{H}T' & A_{42}(X'-UT')
	\end{pmatrix}\\
	P_{22} &= \begin{pmatrix}
	A_{33}(X'-UT') - a_{33}T' & A_{34}(X'-UT') - a_{34}T'\\
	A_{43}(X'-UT') - 2\tau_{*}MT' & A_{44}(X'-UT')
	\end{pmatrix}.
	\end{align*}
	\begin{Lemma}\label{lemma:l1}
		Under the assumptions of \Cref{th:t1}, the matrix
		\begin{align*}
		P_{11} = \begin{pmatrix}
		0 & -T'\\
		A_{21}(X'-UT') & A_{22}(X'-UT') 
		\end{pmatrix}
		\end{align*}
		is invertible.
	\end{Lemma}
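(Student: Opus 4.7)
The plan is to compute $\det P_{11}$ directly and verify that each of its factors is non-zero under the hypotheses of Theorem 1, using the observations about the characteristic matrix $AX'-aT'$ that are already in hand from the discussion preceding the lemma. A cofactor expansion along the first row gives
\[
\det P_{11} \;=\; T' A_{21}(X'-UT') \;=\; \sigma_{*}\,T'(X'-UT'),
\]
so it suffices to prove that $\sigma_{*}$, $T'$, and $X'-UT'$ are each non-zero on the characteristic directions to which the Schur-complement factorisation will be applied.

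The first factor is immediate: by the identification in \cref{appendix:app3}, $\sigma_{*}$ differs from $\sigma$ only by a non-zero prefactor, so $\sigma_{*}\neq 0$ follows from the first genericity hypothesis $\sigma\neq 0$ of Theorem 1.

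For $T'\neq 0$ I would argue by contradiction. If $T'=0$, then $\det(AX'-aT') = X'^{4}\det A$, and the paragraph immediately preceding the lemma already shows $\det A = -\tfrac{W_{HJ}\tau_{*}}{2^{9}\pi^{4}}\,D\neq 0$. Hence $T'=0$ would force $X'=0$, i.e. the trivial direction ruled out by \cref{eq:nondegenerate}. An analogous strategy handles $X'-UT'$: substituting $X'=UT'$ into the characteristic matrix gives $\det(AX'-aT')=(T')^{4}\det(UA-a)$, so it suffices to prove $\det(UA-a)\neq 0$. Because of the way $a$ is built from $A$, the matrix $UA-a$ has many cancellations and only four non-zero rows with a very sparse pattern; a cofactor expansion down the two sparsest rows reduces its determinant to $-4\tau_{*}^{2}\,W_{H}\,(a_{34}+Ua_{32})$. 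Substituting the formulas for $a_{32}$ and $a_{34}$ from the coefficient list produces the clean cancellation $a_{34}+Ua_{32}=W_{H}^{2}$, so $\det(UA-a)=-4\tau_{*}^{2}W_{H}^{3}$. This is non-zero since $W_{H}=1/(2k)\neq 0$ by the dispersion relation \cref{eq:disprel}, and $\tau_{*}\neq 0$ is forced by $\det A\neq 0$ together with $W_{HJ}\neq 0$ and $D\neq 0$.

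The only non-routine step is the algebraic cancellation $a_{34}+Ua_{32}=W_{H}^{2}$, which I expect to be the main obstacle; everything else is either direct cofactor expansion, an application of the genericity conditions from Theorem 1, or a restatement of facts already established just above the lemma. Once that cancellation is noted, the three-factor argument above delivers $\det P_{11}\neq 0$ immediately.
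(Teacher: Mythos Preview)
Your proposal is correct and follows essentially the same route as the paper: both compute $\det P_{11}=\sigma_{*}T'(X'-UT')$, dispose of $\sigma_{*}$ via the genericity condition, and rule out $T'=0$ and $X'=UT'$ by showing each forces the trivial characteristic direction, with the key algebraic step in the second case being exactly the cancellation $a_{34}+Ua_{32}=W_{H}^{2}$ yielding $-4\tau_{*}^{2}W_{H}^{3}$. The only cosmetic difference is that the paper argues $\tau_{*}\neq 0$ directly from $\tau_{*}=\tfrac{1}{2}W_{H}W_{HJ}$ (with $W_{H}\neq 0$ interpreted as non-zero period), whereas you deduce it from $\det A\neq 0$; both are valid.
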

	\begin{proof}
		We note that
		\begin{align}
		\det{P_{11}} = A_{21}T'(X'-UT') = \sigma_{*}T'(X'-UT'). \label{eq:P11}
		\end{align}
		From \Cref{th:t1}, $ \sigma_{*} \neq 0 $, so $ P_{11} $ is invertible if and only if $ T' \neq 0 $ and $ X' \neq UT' $. If $ T' = 0 $, then
		\begin{align*}
		\det(AX' - aT') &= X'^{4}\det{A}\\
		&= \frac{-W_{HJ}\tau_{*}}{2^{9}\pi^{4}}X'^{4}D.
		\end{align*}
		By assumption, $ W_{HJ} \neq 0 $, $ W_{H} \neq 0 $ (this corresponds to the underlying periodic wave having non-zero period) and hence $ \tau_{*} = \frac{1}{2}W_{H}W_{HJ} \neq 0 $. Moreover, $ D \neq 0 $ from \Cref{th:t1}, hence for $ X',T' $ to solve \cref{eq:qlcharacteristics} with $ T' = 0 $ we have:
		\begin{align*}
		\det(AX' - aT') = 0 \implies T' = X' = 0,
		\end{align*}
		which we exclude as a trivial solution. In fact, if $ T'(s_{*}) = 0 $ for some value of $ s = s_{*} $, we have that $ X'(s_{*}) = 0 $ as well, meaning that the characteristic curve $ (X(s),T(s)) $ terminates at $ s = s_{*} $. This is not possible, since the characteristic curves are defined for all $ X,T \in \R $, so in fact $ T' $ can never vanish. Next, if $ X'=UT' $, then:
		\begin{align*}
		\det(AX' - aT') &= \begin{vmatrix}
		0 & -T' & 0 & UT'\\
		0 & 0 & 2\tau_{*} T' & 0\\
		-a_{31}T' & - a_{32}T' & - a_{33}T' & - a_{34}T'\\
		2\tau_{*} W_{H}T' & 0 & - 2\tau_{*} MT' & 0
		\end{vmatrix}\\
		&= -2\tau_{*}T'\begin{vmatrix}
		0 & -T' & UT'\\
		-a_{31}T' & - a_{32}T' & -a_{34}T'\\
		2\tau_{*} W_{H}T' & 0 &  0
		\end{vmatrix}\\
		&= -4\tau_{*}^{2}W_{H}T'^{2}(a_{34}T'^{2} + a_{32}UT'^{2})\\
		&= -4\tau_{*}^{2}W_{H}T'^{4}\left(W_{H}^{2} + UM_{*}W_{HJ} - UW_{H}M_{*J} + U(W_{H}M_{*J} - W_{HJ}M_{*})\right)\\
		&= -4\tau_{*}^{2}W_{H}^{3}T'^{4}.
		\end{align*}
		Only $ T' $ is able to vanish, hence if $ X' = UT' $ then
		\begin{align*}
		\det(AX' - aT') = 0 \implies T' = X' = 0.
		\end{align*}
		There is no non-trivial solution $ (X',T') $ of \cref{eq:qlcharacteristics} for which $ P_{11} $ is singular, which proves the lemma.
	\end{proof}
	\Cref{lemma:l1} allows us to use the Schur determinant formula (see, for example, \cite{Zhang2006}) in \cref{eq:detBlockForm}, in particular:
	\begin{align}
	\det(AX'-aT') &= \det(P_{11})\det(P_{22} - P_{21}P_{11}^{-1}P_{12}). \label{eq:schurcomp}
	\end{align}
	Direct calculation of $ P_{22} - P_{21}P_{11}^{-1}P_{12} $ yields:
	\begin{align*}
	P_{22} - P_{21}P_{11}^{-1}P_{12} &= \begin{pmatrix}
	m_{11} & m_{12}\\
	m_{21} & m_{22}
	\end{pmatrix},
	\end{align*}
	where
	\begin{align}
	m_{11} &= A_{33}(X'-UT') - a_{33}T' - \frac{1}{\sigma_{*}(X'-UT')}(A_{23}(X'-UT') + 2\tau_{*}T')(A_{31}(X'-UT') - a_{31}T') \label{eq:m11}\\
	m_{12} &= A_{34}(X'-UT') - a_{34}T' + \frac{X'}{T'}(A_{32}(X'-UT')-a_{32}T') - \frac{A_{24}}{\sigma_{*}}(A_{31}(X'-UT') - a_{31}T')\\
	&- \frac{A_{22}X'}{\sigma_{*}T'}(A_{31}(X'-UT') - a_{31}T') \label{eq:m12}\\
	m_{21} &= A_{43}(X'-UT') -2\tau_{*}MT' - \frac{1}{\sigma_{*}(X'-UT')}(A_{23}(X'-UT') + 2\tau_{*}T')(A_{41}(X'-UT') + 2\tau_{*}W_{H}T') \label{eq:m21}\\
	m_{22} &= \frac{1}{T'}(X'-UT')(A_{42}X' + A_{44}T') -\frac{A_{24}}{\sigma_{*}}(A_{41}(X'-UT') + 2\tau_{*}W_{H}T') - \frac{A_{22}X'}{\sigma_{*}T'}(A_{41}(X'-UT') + 2\tau_{*}W_{H}T'). \label{eq:m22}
	\end{align}
	We make the substitution $ \lambda = X'-UT' $ and $ \mu = iT' $ (justified by the end result) and note from \cref{eq:schurcomp} that:
	\begin{align*}
	\det(AX'-aT') &= \sigma_{*}(-i\lambda\mu)\begin{vmatrix}
	m_{11} & m_{12}\\
	m_{21} & m_{22}
	\end{vmatrix}\\
	&= \frac{1}{\sigma_{*}W_{H}}\begin{vmatrix}
	\sigma_{*}\lambda W_{H}m_{11} & -i\sigma_{*}\mu W_{H}m_{12}\\
	\sigma_{*}\lambda m_{21} & -i\sigma_{*}\mu m_{22}
	\end{vmatrix},
	\end{align*}
	which can be simplified to:
	\begin{align}
	\det(AX'-aT') &= \frac{1}{\sigma_{*}W_{H}}\begin{vmatrix}
	a'_{11}\lambda^{2} + b'_{11}\lambda\mu + c'_{11}\mu^{2} & a'_{12}\lambda^{2} + b'_{12}\lambda\mu + c'_{12}\mu^{2}\\
	a'_{21}\lambda^{2} + b'_{21}\lambda\mu + c'_{21}\mu^{2} & a'_{22}\lambda^{2} + b'_{22}\lambda\mu + c'_{22}\mu^{2}
	\end{vmatrix}. \label{eq:matrixofquads}
	\end{align}
	We calculate the coefficients in \cref{appendix:app5}. To list them, we have:
	\begin{align*}
	a'_{11} &= 2\tau_{*}\left(\beta + \frac{U}{2}\right)^{2}(\Gamma M_{*H} + \rho_{*}M_{*J} + \sigma_{*}M_{*\gamma_{*}}) -2\tau_{*}\rho_{*}W_{H}\left(\beta + \frac{U}{2}\right) - \sigma_{*}\tau_{*}M_{*} - \nu_{*}\tau_{*}W_{H}\\
	b'_{11} &= 2i\tau_{*}W_{H}(2\tau_{*} + 2\Gamma \left(\beta + \frac{U}{2}\right) + J\sigma_{*})\\
	c'_{11} &= -2\tau_{*}W_{H}^{2}W_{JJ}\\
	a'_{12} &= \rho_{*}\tau_{*}W_{H} - 2\tau_{*}\left(\beta + \frac{U}{2}\right)(\Gamma M_{*H} + \rho_{*}M_{*J} + \sigma_{*}M_{*\gamma_{*}})\\
	b'_{12} &= 2i\tau_{*}W_{H}(\nu_{*} - \Gamma + \rho_{*}\left(\beta + \frac{U}{2}\right))\\
	c'_{12} &= 4\tau_{*}^{2}W_{H}\\
	a'_{21} &= \rho_{*}\tau_{*}W_{H} - 2\tau_{*}\left(\beta + \frac{U}{2}\right)(\Gamma M_{*H} + \rho_{*}M_{*J} + \sigma_{*}M_{*\gamma_{*}}) = a'_{12}\\
	b'_{21} &= 2i\tau_{*}W_{H}(\nu_{*} - \Gamma + \rho_{*}\left(\beta + \frac{U}{2}\right)) = b'_{12}\\
	c'_{21} &= 4\tau_{*}^{2}W_{H} = c'_{12}\\
	a'_{22} &= 2\tau_{*}(\Gamma M_{*H} + \rho_{*} M_{*J} + \sigma_{*} M_{*\gamma_{*}})\\
	b'_{22} &= -4i\tau_{*}\rho_{*}W_{H}\\
	c'_{22} &= 4\tau_{*}\nu_{*}W_{H}.
	\end{align*}
	We note that the matrix in \cref{eq:matrixofquads} is symmetric, which simplifies our calculations. Now we perform row and column operations to \cref{eq:matrixofquads}, which will leave the determinant unchanged. Adding $ (\beta + \frac{U}{2})  $ times the second row to the first row, and then $ (\beta + \frac{U}{2}) $ times the second column to the first column, we have the following result:
	\begin{align}
	\det(AX'-aT') &= \frac{1}{\sigma_{*}W_{H}}\begin{vmatrix}
	Q_{11}(\lambda,\mu) & Q_{12}(\lambda,\mu)\\
	Q_{21}(\lambda,\mu) & Q_{22}(\lambda,\mu)
	\end{vmatrix}. \label{eq:Qs}
	\end{align}
	The polynomials are calculated in \cref{appendix:app5}. We list them as:
	\begin{align*}
	Q_{11}(\lambda,\mu) &= \frac{\tau_{*}}{2^{15}\pi^{5}\sigma_{*}}(d_{2}\lambda^{2} + d_{1}\lambda\mu + d_{0}\mu^{2})\\
	Q_{12}(\lambda,\mu) &= \frac{\tau_{*}}{2^{15}\pi^{5}\sigma_{*}}(b_{2}\lambda^{2} + b_{1}\lambda\mu + b_{0}\mu^{2})\\
	Q_{21}(\lambda,\mu) &= Q_{12}(\lambda,\mu)\\
	Q_{22}(\lambda,\mu) &= \frac{\tau_{*}}{2^{15}\pi^{5}\sigma_{*}}(a_{2}\lambda^{2} + a_{1}\lambda\mu + a_{0}\mu^{2})
	\end{align*}
	We can swap the two columns and then the two rows without changing the determinant:
	\begin{align}
	\det(AX'-aT') &= \frac{1}{\sigma_{*}W_{H}}\begin{vmatrix}
	Q_{22}(\lambda,\mu) & Q_{21}(\lambda,\mu)\\
	Q_{12}(\lambda,\mu) & Q_{11}(\lambda,\mu)
	\end{vmatrix}\\
	&= \frac{\tau_{*}^{2}}{2^{30}\pi^{10}\sigma_{*}^{3}W_{H}}\begin{vmatrix}
	a_{2}\lambda^{2} + a_{1}\lambda\mu + a_{0}\mu^{2} & b_{2}\lambda^{2} + b_{1}\lambda\mu + b_{0}\mu^{2}\\
	b_{2}\lambda^{2} + b_{1}\lambda\mu + b_{0}\mu^{2} & d_{2}\lambda^{2} + d_{1}\lambda\mu + d_{0}\mu^{2}
	\end{vmatrix}\\
	&= \frac{W_{H}W_{HJ}^{2}}{2^{32}\pi^{10}\sigma_{*}^{3}}\det\left(\lambda^{2}\begin{pmatrix}
	a_{2} & b_{2}\\
	b_{2} & d_{2}
	\end{pmatrix} + \lambda\mu \begin{pmatrix}
	a_{1} & b_{1}\\
	b_{1} & d_{1}
	\end{pmatrix} + \mu^{2} \begin{pmatrix}
	a_{0} & b_{0}\\
	b_{0} & d_{0}
	\end{pmatrix}\right). \label{eq:finalDetWHJ}
	\end{align}
	This concludes the calculation of $ \det(AX'-aT') $ when $ W_{HJ} \neq 0 $. For completeness, we must carry out the same calculation starting with the assumption that $ W_{JJ} \neq 0 $ and $ W_{HH} \neq 0 $, which is the other case arising from the genericity condition $ \sigma_{*} \neq 0 $. This second calculation is unremarkable, so we give the final result here, but provide some working in \cref{appendix:app6}. In this case, we have, almost exactly as before:
	\begin{align}
	\det(AX'-aT') = -\frac{W_{H}W_{JJ}^{2}}{2^{32}\pi^{10}\sigma_{*}^{3}} \det\left(\lambda^{2}\begin{pmatrix}
	a_{2} & b_{2}\\
	b_{2} & d_{2}
	\end{pmatrix} + \lambda\mu \begin{pmatrix}
	a_{1} & b_{1}\\
	b_{1} & d_{1}
	\end{pmatrix} + \mu^{2} \begin{pmatrix}
	a_{0} & b_{0}\\
	b_{0} & d_{0}
	\end{pmatrix}\right), \label{eq:finalDetWJJ}
	\end{align}
	with the difference being the non-zero factor $ W_{JJ}^{2} $.
	This leads us to the following theorem.
	\begin{Theorem}\label{th:maintheorem}
		Suppose that the assumptions of \Cref{th:t1} hold, that is, $ \sigma \neq 0 $ and $ D \neq 0 $. Further, assume that $ T_{*} = 4\pi W_{H} \neq 0 $. Then, the equation for the characteristics of the Whitham modulation equations associated with the nonlinear Schr{\"o}dinger \cref{eq:1DNLS} is equivalent to the normal form for the continuous bands of spectrum emerging from the origin in the spectral plane, given in \Cref{th:t2}.
	\end{Theorem}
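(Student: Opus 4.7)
The plan is to treat the theorem as the culmination of the explicit computation already set up in \Cref{sec:s32}, organizing the argument as a reduction of $\det(AX'-aT')=0$ to the determinant appearing in \Cref{th:t2}. First I would verify that the degenerate regime $X'=T'=0$ is genuinely excluded: since $\sigma_*\neq 0$, $T_*=4\pi W_H\neq 0$ (so $\tau_*=\tfrac{1}{2}W_H W_{HJ}\neq 0$ in the first case, $\tau_*\neq 0$ likewise in the second), and $D\neq 0$, the computation $\det A = -\frac{W_{HJ}\tau_*}{2^9\pi^4}D\neq 0$ shows $\det(c_1 A + c_2 a)\not\equiv 0$, guaranteeing a well-defined characteristic equation.

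Next, under the genericity assumption $\sigma_*\neq 0$, split into the two cases that $W_{HJ}\neq 0$ or $W_{JJ}\neq 0$ (at least one must hold since $\sigma_*$ is their combination). In the first case I would invoke \Cref{lemma:l1}, which makes the top-left $2\times 2$ block $P_{11}$ invertible along any nontrivial characteristic, and apply the Schur complement formula
\[
\det(AX'-aT')=\det(P_{11})\,\det(P_{22}-P_{21}P_{11}^{-1}P_{12}).
\]
Carrying out this block computation produces a $2\times 2$ matrix whose entries $m_{ij}$ are already written down in \cref{eq:m11,eq:m12,eq:m21,eq:m22}. The crucial algebraic substitution is then $\lambda=X'-UT'$, $\mu=iT'$, which turns each $m_{ij}$ into a homogeneous quadratic form in $(\lambda,\mu)$, yielding the matrix in \cref{eq:matrixofquads} with coefficients $a'_{ij},b'_{ij},c'_{ij}$.

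The last and most delicate step is to exhibit the explicit identification between this quadratic-form matrix and the normal form of \Cref{th:t2}. I would apply the row/column operations of adding $(\beta+U/2)$ times the second row (resp.\ column) to the first, using the symmetry of the $2\times 2$ matrix, so that the Galilean shift is absorbed and the entries collapse onto the Poisson-bracket quantities defined in \Cref{appendix:app1}. The resulting polynomials $Q_{ij}(\lambda,\mu)$ should match, up to the common prefactor $\frac{\tau_*}{2^{15}\pi^5\sigma_*}$, the coefficients $a_j,b_j,d_j$ of the normal form. After swapping rows and columns (which preserves the determinant), factoring out the prefactors, and recording the overall nonzero scalar $\frac{W_H W_{HJ}^2}{2^{32}\pi^{10}\sigma_*^3}$, I would arrive at \cref{eq:finalDetWHJ}, proving equivalence with \cref{eq:lnormalform} in this case.

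For completeness, the parallel argument under $W_{JJ}\neq 0$ follows by applying the implicit function theorem to solve for $J$ rather than $H$, reorganizing the chain-rule expansions of $\partial_T W_H,\partial_T M_*$ accordingly, and re-running the Schur-complement and row/column reduction; this gives \cref{eq:finalDetWJJ} with prefactor $-\frac{W_H W_{JJ}^2}{2^{32}\pi^{10}\sigma_*^3}$. The main obstacle throughout is the sheer bookkeeping of the Poisson-bracket identities: identifying, after the linear combinations have been performed, that the coefficients emerging from the Whitham side coincide exactly with the $a_j,b_j,d_j$ appearing in the Jordan-chain normal form. This is ultimately a matter of exploiting the symmetry of mixed partials of $W$ and the relations $K=\pi W$, $T_*=4\pi W_H$, $\eta=2\pi \eta_*$, $M=2\pi M_*$ from \Cref{appendix:app2}, so that the Poisson brackets in the two worlds agree up to the explicit powers of $2\pi$ absorbed into the overall prefactor.
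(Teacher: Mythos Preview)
Your proposal is correct and follows essentially the same route as the paper: nondegeneracy via $\det A\neq 0$, the case split on $W_{HJ}\neq 0$ versus $W_{JJ}\neq 0$, Schur complement through \Cref{lemma:l1}, the substitution $\lambda=X'-UT'$, $\mu=iT'$, the $(\beta+U/2)$ row/column operations, and the identification of the resulting $Q_{ij}$ with the normal-form coefficients via the Poisson-bracket dictionary. Two small slips worth fixing: in the second case it is $W_{JJ}$ (not $\tau_*$, which may vanish if $W_{HJ}=0$) that carries the nondegeneracy, and the relation from \Cref{appendix:app2} is $\eta=4\pi\eta_*$, not $2\pi\eta_*$.
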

	\begin{proof}
		To solve for the characteristics of the Whitham system, we substitute into \cref{eq:qlcharacteristics} the appropriate expression for $ \det(AX'-aT') $ given in either \cref{eq:finalDetWHJ} or \cref{eq:finalDetWJJ}. In both cases,  we can divide $ \det(AX'-aT') $ by the constants we have assumed to be non-zero, so $ X'$ and $ T' $ solve the quartic
		\begin{align*}
		\det\left(\lambda^{2}\begin{pmatrix}
		a_{2} & b_{2}\\
		b_{2} & d_{2}
		\end{pmatrix} + \lambda\mu \begin{pmatrix}
		a_{1} & b_{1}\\
		b_{1} & d_{1}
		\end{pmatrix} + \mu^{2} \begin{pmatrix}
		a_{0} & b_{0}\\
		b_{0} & d_{0}
		\end{pmatrix}\right) = 0,
		\end{align*}
		with
		\begin{align}
		\lambda = X' - UT',\quad \mu = iT'. \label{eq:charstransformation}
		\end{align}
		This is exactly the normal form given in \Cref{th:t2}.
	\end{proof}
	\begin{remark}
		The substitution $ \mu = iT' $ implies that $ T' \in i\R $, however this is not required for $ (X',T') $ to be a solution to \cref{eq:qlcharacteristics}. Rather, we can safely make the restriction that $ T' \in i\R $ by multiplying \cref{eq:qlcharacteristics} by a phase function $ e^{4if(s)} $ so that $ T' $ becomes purely imaginary and $ X'\mapsto X'e^{if(s)} $ is possibly complex. \label{re:r1}
	\end{remark}
	\begin{corollary}
		Whitham modulation theory predicts the same criterion for modulational instability as the linear theory given in \Cref{cor:c1}, that is, the existence of a complex characteristic of the system of modulation equations (\cref{eq:qlsystem}) corresponds to a root $ \lambda $ of the normal form \cref{eq:lnormalform} with $ \mathrm{Re}(\lambda) \neq 0 $, indicating modulational instability of the underlying wave. \label{cor:c2}
	\end{corollary}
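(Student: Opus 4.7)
The plan is to combine \Cref{th:maintheorem} with the substitution $\lambda = X' - UT'$, $\mu = iT'$ to translate the Whitham hyperbolicity criterion directly into the spectral criterion of \Cref{cor:c1}. Concretely, \Cref{th:maintheorem} writes $\det(AX'-aT')$ as a non-zero constant multiple of the normal form polynomial \cref{eq:lnormalform} evaluated at $(\lambda,\mu)=(X'-UT',\,iT')$. Under the standing hypotheses $\sigma\neq 0$, $D\neq 0$, and $T_*\neq 0$ that constant prefactor (either $W_H W_{HJ}^2/(2^{32}\pi^{10}\sigma_*^3)$ or its $W_{JJ}^2$ analogue) cannot vanish, so $\det(AX'-aT')=0$ holds if and only if the normal form vanishes at the corresponding $(\lambda,\mu)$. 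The map $(X',T')\mapsto(\lambda,\mu)$ is a linear isomorphism of $\mathbb{C}^2$ with inverse $T' = -i\mu$, $X' = \lambda - iU\mu$.

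The second step is to match the two notions of instability under this isomorphism. Along a characteristic curve one has the speed
\begin{align*}
c := \frac{X'}{T'} = \frac{i\lambda}{\mu} + U.
\end{align*}
By \Cref{re:r1}, one may rescale $(X',T')$ by a phase without changing either the ratio $c=X'/T'$ or the zero set of the (homogeneous) characteristic polynomial, and thereby assume $\mu\in\mathbb{R}$. With $\mu$ real and $U$ real, $c\in\mathbb{R}$ if and only if $i\lambda/\mu\in\mathbb{R}$, i.e.\ if and only if $\lambda$ is purely imaginary, i.e.\ $\mathrm{Re}(\lambda)=0$. Taking the contrapositive, a complex characteristic of \cref{eq:qlsystem} exists precisely when the normal form \cref{eq:lnormalform} admits, for some real $\mu$, a root $\lambda$ with $\mathrm{Re}(\lambda)\neq 0$. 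Appealing to \Cref{cor:c1} then identifies this with the rigorous modulational instability criterion, proving the corollary.

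The hard part is conceptual rather than computational: ensuring that "complex characteristic of the Whitham system" and "spectral band off the imaginary axis" are genuinely the same condition, rather than two parallel ones that merely happen to agree under some parametrization. This is exactly where \Cref{re:r1} does the work, legitimizing the restriction to $\mu\in\mathbb{R}$ by exhibiting a phase reparametrization of the tangent vector along the characteristic curve that places $T'$ on the imaginary axis while preserving $c=X'/T'$. Once this restriction is justified, the bijection $(X',T')\leftrightarrow(\lambda,\mu)$ automatically respects both the vanishing of the characteristic determinant and the relevant reality condition, and the equivalence of criteria follows immediately from \Cref{th:maintheorem} and \Cref{cor:c1}.
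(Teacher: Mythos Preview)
Your proposal is correct and follows essentially the same route as the paper's proof: both compute the characteristic speed $c=X'/T'=i\lambda/\mu+U$, invoke \Cref{re:r1} to justify taking $\mu\in\mathbb{R}$, and conclude that $c\notin\mathbb{R}$ if and only if $\mathrm{Re}(\lambda)\neq 0$. The paper additionally passes through the explicit reduction $\det(X'/T'-A^{-1}a)=0$ using $\det A\neq 0$ and $T'\neq 0$ (established in \Cref{lemma:l1}) to identify the characteristic speeds as eigenvalues of $A^{-1}a$, but this is a cosmetic reformulation of the same argument you give.
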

	\begin{proof}
		Noting that $ \det A \neq 0 $ and $ T'(s) \neq 0 $, we may rewrite \cref{eq:qlcharacteristics} as:
		\begin{align}
		\det\left(\frac{X'}{T'} - A^{-1}a\right) = 0. \label{eq:simplifiedchars}
		\end{align}
		Since $ T'(s) \neq 0 $ for any $ s $, the characteristic curves may instead be parametrized by $ T $, so that $ \frac{dX}{dT} = \frac{X'(s)}{T'(s)} $. Hence the eigenvalues $ \frac{X'(s)}{T'(s)} $ of $ A^{-1}a $ define the characteristic curves, which is clear once \cref{eq:qlsystem} is instead written as:
		\begin{align*}
		\begin{pmatrix}
		J_{T}\\\gamma_{*T}\\U_{T}\\\beta_{T}
		\end{pmatrix} + A^{-1}a\begin{pmatrix}
		J_{X}\\\gamma_{*X}\\U_{X}\\\beta_{X}
		\end{pmatrix} = 0,
		\end{align*}
		and similarly for \cref{eq:qlsystem2}. According to the Whitham theory \cite{Whi1999}, modulational instability occurs when one of the characteristics of a Whitham system is complex. From \cref{eq:simplifiedchars}, this is equivalent to $ \frac{X'}{T'} \in \C $. We have
		\begin{align*}
		\frac{X'}{T'} &= \frac{\lambda - iU\mu}{-i\mu}\\
		&= \frac{i\lambda}{\mu} + U.
		\end{align*}
		We note that $ \mu \in \R $ since it is the Floquet exponent of the periodic solutions to \cref{eq:spectralProblem}, and from \cref{re:r1} we know that this does not restrict the class of characteristic curves. Hence we have that
		\begin{align*}
		\frac{X'}{T'} \in \C\iff \mathrm{Re}(\lambda) \neq 0,
		\end{align*}
		which agrees with \Cref{cor:c1}.
	\end{proof}
	\begin{remark}
		The transformation from the characteristic variables of the Whitham system $ (X',T') $ to the spectral variable $ \lambda $ and Floquet exponent $ \mu $ may have greater significance in the scope of rigorously proving the agreement of Whitham modulation theory with linear stability at the origin. In fact, the transformation in all previous examples \cite{Serre2005,OZ2006,JZ2010,Jones14,JP2020} is:
		\begin{align*}
		\lambda = X'-cT',\quad \mu = -\frac{iT'}{T_{*}},
		\end{align*}
		where $ T_{*} $ is the period of the underlying wave. To explain the factor of $ \frac{1}{T_{*}} $, the cited papers all consider the Floquet multiplier to have the form $ e^{i\mu} $, whereas we have chosen to consider $ e^{i\mu T_{*}} $ in keeping with \cite{LBJM2019}.
	\end{remark}
	\section{Discussion and open problems}\label{sec:s4}
	In this paper, we show that the formal Whitham modulation theory correctly predicts the modulational instability of periodic, travelling wave solutions of the general nonlinear Schr{\"o}dinger \cref{eq:1DNLS} as prescribed by the rigorous spectral analysis at the origin in \cite{LBJM2019}. Applying the variational principle to the averaged Lagrangian allows us to derive four modulation equations, which we then homogenize using the genericity conditions described in \cite{LBJM2019}. This results in two cases depending on which slowly-varying parameters we eliminate, however the calculations are essentially the same. Finally, we compute a quartic equation for the characteristics of the homogenized modulation equations from the determinant of the quasi-linear system \cref{eq:qlsystem}. By invoking various determinant identities inspired by the proof of \cite[Proposition 1]{LBJM2019} and also a change of variables, we deduce that the characteristics of the Whitham system satisfy the same quartic equation as the normal form for the four continuous bands of spectrum at the origin \cite{LBJM2019}.\par
	Leisman et al. also provide a modulational instability criterion for transverse perturbations, where they consider the stability of periodic solutions of \cref{eq:1DNLS} to the two-dimensional equation:
	\begin{align*}
	i\psi_{t} &= \psi_{xx} \pm \psi_{zz} + \zeta f(\lvert\psi\rvert^{2})\psi.
	\end{align*}
	The extension is neat from the perspective of the linear theory, however we believe extending the Whitham theory would involve a two-phase approach, making the homogenization process considerably more difficult. We have decided that this lies outside the scope of this paper.\par
	One interesting future direction for our results would be to compute Riemann invariants and investigate the behaviour of dispersive shock waves for suitable one-dimensional potentials $ f(\lvert\psi\rvert^{2}) $. This would involve diagonalizing the matrix $ A^{-1}a $ from the quasi-linear system \cref{eq:qlsystem}.\par
	Another obvious extension would be to consider a coupled general nonlinear Schr{\"o}dinger system:
	\begin{align*}
	i\psi_{1t} &= \psi_{1xx}  + \zeta_{1} f(\zeta_{1}\lvert\psi_{1}\rvert^{2}, \zeta_{2}\lvert\psi_{2}\rvert^{2})\psi_{1}\\
	i\psi_{2t} &= \psi_{2xx} + \zeta_{2} f(\zeta_{1}\lvert\psi_{1}\rvert^{2}, \zeta_{2}\lvert\psi_{2}\rvert^{2})\psi_{2},
	\end{align*}
	which has the Lagrangian:
	\begin{align*}
	L = i(\overline{\psi_{1}}\psi_{1t} - \psi_{1}\overline{\psi}_{1t}) + i(\overline{\psi_{2}}\psi_{2t} - \psi_{2}\overline{\psi}_{2t}) + 2\lvert\psi_{1x}\rvert^{2} + 2\lvert\psi_{2x}\rvert^{2} - 2 F(\zeta_{1}\lvert\psi_{1}\rvert^{2},\zeta_{2}\lvert\psi_{2}\rvert^{2}).
	\end{align*}
	More generally, we could couple $ n $ general nonlinear Schr{\"o}dinger equations together, which yields
	\begin{align}
	i\partial_{t}\boldsymbol{\psi} &= \partial_{x}^{2}\boldsymbol{\psi} + \nabla_{\boldsymbol{\psi}} F(\lvert\psi_{1}\rvert^{2},\dots,\lvert\psi_{n}\rvert^{2})\\
	L &= i(\overline{\boldsymbol{\psi}}\cdot \partial_{t}\boldsymbol{\psi} - \boldsymbol{\psi}\cdot \partial_{t}\overline{\boldsymbol{\psi}}) + 2\lvert\partial_{x}\boldsymbol{\psi}\rvert^{2} - 2F(\lvert\psi_{1}\rvert^{2},\dots,\lvert\psi_{n}\rvert^{2}),
	\end{align}
	where $ F $ is a scalar potential. We believe that the Whitham theory for these systems would closely resemble the procedure for \cref{eq:1DNLS}, however difficulty may arise in the homogenization process. In particular, one must find suitable genericity conditions to eliminate enough slowly-varying parameters.\par
	In the context of the recent work of Chen et al. \cite{CPW2020}, \Cref{th:maintheorem} implies that a set of complex characteristics of the Whitham modulation equations supports rogue wave solutions localized on the background of periodic standing wave solutions of the focusing NLS equation. On the other hand, it appears that the existence of real characteristics is a necessary (but not sufficient) condition for observing algebraic solitons travelling on the background of periodic standing waves. More research is needed before this connection is rigorously established.\par
	Finally, we cite the open problem of proving that the modulational instability criterion derived from the hyperbolicity of a Whitham modulation system coincides with spectral instability of a periodic travelling wave solution of some general class of PDEs. The difficulty in developing a general proof lies in the homogenization of the Whitham equations. This process is guided by genericity conditions which are determined from the underlying PDE in the course of the rigorous linearized spectral analysis; this explains the disparity amongst the analyses of \cite{Serre2005,OZ2006,JZ2010,Jones14,JP2020} and this paper. On the other hand, it is promising that same the transformation between the characteristics of the Whitham system and the spectral variables (\cref{eq:charstransformation}) appears in the cited examples where the Whitham theory has been rigorously verified.
	\section{Acknowledgements}
	Both authors acknowledge the support of the Australian Research Council under grant DP200102130.
	\begin{appendices}\crefalias{section}{appsec}
		\section{Poisson brackets from the linear theory}\label{appendix:app1}
		We first define the Poisson brackets:
		\begin{align*}
		\gamma &= \{T_{*},\eta\}_{\kappa,\omega} \quad \rho = \{T_{*},\eta\}_{\omega,E}\quad \tau = \frac{T_{*}T_{*\kappa}}{2}\\
		\nu &= -\frac{T_{*}T_{*E}}{2} \quad \xi =  \{T_{*},\eta\}_{\kappa,\zeta}\quad \chi = \{T_{*},\eta\}_{\zeta,E}.
		\end{align*}
		With these definitions, we can list the matrix entries in \cref{eq:lnormalform}:
		\begin{align*}
		a_{2} &= -\frac{\sigma}{2}(\gamma M_{E} + \rho M_{\kappa} + \sigma M_{\omega})\\
		b_{2} &= -\frac{\sigma\rho T_{*}}{2} = -\sigma(\tau M_{E} + \nu M_{\kappa})\\
		d_{2} &= -\frac{\sigma}{2}(\nu T_{*} + \frac{\sigma}{2}M)\\
		a_{1} &= 2i\sigma\rho T_{*}\\
		b_{1} &= i\sigma T_{*}(\nu + \gamma)\\
		d_{1} &= i\sigma T_{*}(2\tau + \sigma\kappa)\\
		a_{0} &= 2\sigma\nu T_{*}\\
		b_{0} &= 2\sigma\tau T_{*}\\
		d_{0} &= 2\sigma T_{*}(\omega\gamma - \zeta\xi - E\sigma)
		\end{align*}
		\section{Relations between $ K $ and $ W $}\label{appendix:app2}
		In this appendix, we give the relations between the derivatives of $ K $ and the derivatives of $ W $, defined in \cref{eq:classicalAction,eq:W} respectively. This is carried out by taking a derivative of \cref{eq:KW} with respect to one of the parameters $ H,U,J,\gamma,\beta,\zeta $ and then applying the chain rule, simplifying the results using \cref{eq:omega,eq:kappa,eq:E}. Finally, we use \cref{eq:TNLS,eq:etaNLS,eq:MNLS} to eliminate $ K $. For the period $ T_{*} = K_{E} $ in the linear theory:
		\begin{align*}
		T_{*} &= 4\pi W_{H}\\
		T_{*E} &= 16\pi W_{HH}\\
		T_{*\kappa} &= 16\pi(W_{HJ} - (\beta + \frac{U}{2})W_{HH}) = -16\pi\eta_{*H}\\
		T_{*\omega} &= -4\pi M_{H}.
		\end{align*}
		Next, for $ \eta $ we have:
		\begin{align*}
		\eta &= 4\pi\eta_{*}\\
		\eta_{E} &= 16\pi\eta_{*H}\\
		\eta_{\kappa} &= 16\pi(\eta_{*J} - (\beta + \frac{U}{2})\eta_{*H})\\
		\eta_{\omega} &= -4\pi\eta_{*\gamma_{*}}.
		\end{align*}
		Finally for $ M $:
		\begin{align*}
		M &= 2\pi M_{*}\\
		M_{E} &= 8\pi M_{*H}\\
		M_{\kappa} &= 8\pi(M_{*J} - (\beta + \frac{U}{2})M_{*H})\\
		M_{\omega} &= -2\pi M_{*\gamma_{*}}.
		\end{align*}
		Since $ \zeta $ is the same parameter for both cases, we have that:
		\begin{align*}
		K_{\zeta} = \pi W_{\zeta}.
		\end{align*}
		\section{Poisson brackets for the modulation equations}\label{appendix:app3}
		Similarly to the Poisson brackets in \cref{appendix:app1}, we make the following definitions for Poisson brackets for the Whitham system:
		\begin{align*}
		\sigma_{*} &= \{W_{H},\eta_{*}\}_{H,J} = \{W_{H},W_{J}\}_{J,H}\\
		\rho_{*} &= \{W_{H},\eta_{*}\}_{\gamma_{*},H} = \{W_{H},W_{J}\}_{H,\gamma_{*}}\\
		\Gamma &= \{W_{H},\eta_{*}\}_{J,\gamma_{*}} = \{W_{H},W_{J}\}_{\gamma_{*},J}\\
		\nu_{*} &= -\frac{1}{2}W_{H}W_{HH}\\
		\tau_{*} &= \frac{1}{2}W_{H}W_{HJ}\\
		\xi_{*} &= \{W_{H},\eta_{*}\}_{J,\zeta} = \{W_{H},W_{J}\}_{\zeta,J}\\
		\chi_{*} &= \{W_{H},\eta_{*}\}_{\zeta,H} = \{W_{H},W_{J}\}_{H,\zeta}.
		\end{align*}
		The reason we have included the Poisson brackets in terms of $ W_{J} $ as well as $ \eta_{*} $ is because it is more straightforward to use the symmetry of mixed partial derivatives of $ W $ when using $ W_{J} $.\par
		We make extensive use of several properties of Poisson brackets. Let $ P,Q,S $ be functions of $ w,x,y,z $ with symmetric mixed partial derivatives. Then:
		\begin{itemize}
			\item $ \{P,Q\}_{w,x} = -\{P,Q\}_{x,w} = -\{Q,P\}_{w,x} $;
			\item $ \{P_{w},Q_{x}\}_{y,z} = \{P_{y},Q_{z}\}_{w,x} $;
			\item $ \{P,P\}_{w,x} = 0 $;
			\item $ \{PQ,S\}_{w,x} = P\{Q,S\}_{w,x} + Q\{P,S\}_{w,x} $;
			\item $ \{\alpha P,Q\}_{w,x} = \alpha\{P,Q\}_{w,x} $ for a constant $ \alpha $.
		\end{itemize}
		The above definitions allow us to express the equivalent Poisson brackets in \cref{appendix:app1} in terms of these newly-defined Poisson brackets for the Whitham parameters:
		\begin{align*}
		\sigma &= 256\pi^{2}\sigma_{*}\\
		\rho &= -64\pi^{2}\rho_{*}\\
		\gamma &= -64\pi^{2}(\Gamma + \left(\beta + \frac{U}{2}\right)\rho_{*})\\
		\nu &= 64\pi^{2}\nu_{*}\\
		\tau &= 64\pi^{2}(\tau_{*} + \left(\beta + \frac{U}{2}\right)\nu_{*})\\
		\chi &= 64\pi^{2}\chi_{*}\\
		\xi &= 64\pi^{2}(\xi_{*} + \left(\beta + \frac{U}{2}\right)\chi_{*})
		\end{align*}
		Ultimately, we use these expressions to express the matrix entries of \cref{eq:lnormalform} (given in \cref{appendix:app1}) in terms of the Whitham parameters:
		\begin{align*}
		a_{2} &= 2^{16}\pi^{5}\sigma_{*}(\Gamma M_{*H} + \rho_{*} M_{*J} + \sigma_{*} M_{*\gamma_{*}})\\
		b_{2} &= 2^{15}\pi^{5}\rho_{*}\sigma_{*}W_{H} = -2^{16}\sigma_{*}(\tau_{*} M_{*H} + \nu_{*} M_{*J})\\
		d_{2} &= -2^{15}\pi^{5}\sigma_{*}(\nu_{*} W_{H} + \sigma_{*} M_{*})\\
		a_{1} &= -2^{17}i\pi^{5}\sigma_{*}\rho_{*} W_{H}\\
		b_{1} &= 2^{16}i\pi^{5}\sigma_{*}W_{H}(\nu_{*} - \Gamma - \left(\beta + \frac{U}{2}\right)\rho_{*})\\
		d_{1} &= 2^{16}i\pi^{5}\sigma_{*}W_{H}(2\tau_{*} + 2\left(\beta + \frac{U}{2}\right)\nu_{*} + J\sigma_{*})\\
		a_{0} &= 2^{17}\pi^{5}\sigma_{*}\nu_{*}W_{H}\\
		b_{0} &= 2^{17}\pi^{5}\sigma_{*}W_{H}(\tau_{*} + \left(\beta + \frac{U}{2}\right)\nu_{*})
		\end{align*}
		For the last entry $ d_{0} $, we use the identity:
		\begin{align}
		W = 2W_{H}\left(H + \beta J + \frac{UJ}{2}\right) - 2M_{*}\left(\beta U + \frac{U^{2}}{4} - \gamma_{*}\right) - J\eta_{*} - 2\zeta W_{\zeta}. \label{eq:Widentity}
		\end{align}
		In particular, taking Poisson brackets of \cref{eq:Widentity} with $ W_{J} $ and derivatives with respect to $ H $ and $ J $, and using the fact that $ W_{J} = 0 $ in \cref{eq:Bdisprel} yields:
		\begin{align*}
		W_{H}\eta_{*J} &= -\sigma_{*}\left(2H + \beta J + \frac{UJ}{2}\right) - 2\Gamma\left(\beta U + \frac{U^{2}}{4} - \gamma_{*}\right) - 2\zeta\xi_{*}.
		\end{align*}
		Similarly, we have for a Poisson bracket of \cref{eq:Widentity} with $ W_{H} $ and derivatives $ H $ and $ J $:
		\begin{align*}
		W_{H}\eta_{*H} &= 2\rho_{*}\left(\beta U + \frac{U^{2}}{4} - \gamma_{*}\right) + J\sigma_{*} + 2\zeta\chi_{*}.
		\end{align*}
		Hence
		\begin{align*}
		d_{0} &= 2\sigma T_{*}(\omega\gamma - \zeta\xi - E\sigma)\\
		&= 2^{17}\pi^{5}\sigma_{*}W_{H}(-\left(\beta U + \frac{U^{2}}{4} - \gamma_{*}\right)\Gamma - \zeta\xi_{*} - \left(H + \beta J + \frac{UJ}{2}\right)\sigma_{*} - \left(\beta + \frac{U}{2}\right)(\rho_{*}\left(\beta + \frac{U^{2}}{4} - \gamma_{*}\right) + \zeta\chi_{*}))\\
		&= 2^{16}\pi^{5}\sigma_{*}W_{H}^{2}(\eta_{*J} - \eta_{*H}\left(\beta + \frac{U}{2}\right)).
		\end{align*}
		Now writing $ \eta_{*H},\eta_{*J} $ as the derivatives of $ W_{J} $ using its definition in \cref{eq:WJ}, we see that
		\begin{align*}
		d_{0} &= 2^{16}\pi^{5}\sigma_{*}W_{H}^{2}(2W_{HJ}\left(\beta + \frac{U}{2}\right) - W_{JJ} - \left(\beta + \frac{U}{2}\right)^{2}W_{HH})\\
		&=2^{16}\pi^{5}\sigma_{*}W_{H}(4\tau_{*}\left(\beta + \frac{U}{2}\right) + 2\nu_{*}\left(\beta + \frac{U}{2}\right)^{2} - W_{H}W_{JJ}).
		\end{align*}
		We also provide a calculation of the determinant $ D $ from \cref{th:t1} in terms of the Whitham parameters:
		\begin{align*}
		D &= -\frac{4}{\sigma^{3}}(a_{2}d_{2} - b_{2}^{2})\\
		&= -\frac{4}{2^{24}\pi^{6}\sigma_{*}^{3}}\left((2^{16}\pi^{5}\sigma_{*}(\Gamma M_{*H} + \rho_{*}M_{*J} + \sigma_{*}M_{*\gamma}))(-2^{15}\pi^{5}\sigma_{*}(\nu_{*}W_{H} + \sigma_{*}M_{*})) - (2^{16}\pi^{5}\sigma_{*}(\tau_{*}M_{*H} + \nu_{*}M_{*J}))^{2}\right)\\
		&= -\frac{2^{9}\pi^{4}}{\sigma_{*}}\left(-(\Gamma M_{*H} + \rho_{*}M_{*J} + \sigma_{*}M_{*\gamma_{*}})(\nu_{*}W_{H} + \sigma_{*}M_{*}) - 2(\tau_{*}M_{*H} + \nu_{*}M_{*J})^{2}\right)\\
		&= \frac{2^{9}\pi^{4}}{\sigma_{*}}\left(\sigma_{*}M_{*}\left(\Gamma M_{*H} + \rho_{*}M_{*J} + \sigma_{*}M_{*\gamma_{*}}\right) + \nu_{*}W_{H}\left(\Gamma M_{*H} + \rho_{*}M_{*J} + \sigma_{*}M_{*\gamma_{*}}\right) + \frac{W_{H}^{2}}{2}\left(W_{HJ}M_{*H} - W_{HH}M_{*J}\right)^{2}\right)\\
		&= 2^{9}\pi^{4}\bigg(M_{*}\left(\Gamma M_{*H} + \rho_{*} M_{*J} + \sigma_{*}M_{*\gamma_{*}}\right)\\
		&+ \frac{W_{H}^{2}}{2\sigma_{*}}\left(-W_{HH}\left(\Gamma M_{*H} + \rho_{*}M_{*J} + \sigma_{*}M_{*\gamma_{*}}\right) + W_{HJ}^{2}M_{*H}^{2} - 2W_{HJ}W_{HH}M_{*H}M_{*J} + W_{HH}^{2}M_{*J}^{2}\right)\bigg)\\
		&= 2^{9}\pi^{4}\bigg(M_{*}\left(\Gamma M_{*H} + \rho_{*}M_{*J} + \sigma_{*}M_{*\gamma}\right) - \frac{W_{H}^{2}}{2\sigma_{*}}\Big(\sigma_{*}W_{HH}M_{*\gamma} + W_{HH}M_{*H}(M_{*H}W_{JJ} - W_{HJ}M_{*J})\\
		&+ W_{HH}M_{*J}(W_{HH}M_{*J} - W_{HJ}M_{*H}) - W_{HJ}^{2}M_{*H}^{2} + 2W_{HJ}W_{HH}M_{*H}M_{*J} - W_{HH}^{2}M_{*J}^{2}\Big)\bigg)\\
		&= 2^{9}\pi^{4}\bigg(M_{*}\left(\Gamma M_{*H} + \rho_{*}M_{*J} + \sigma_{*}M_{*\gamma_{*}}\right) - \frac{W_{H}^{2}}{2\sigma_{*}}\left(\sigma_{*}W_{HH}M_{*\gamma_{*}} - M_{*H}^{2}(W_{HJ}^{2} - W_{HH}W_{JJ})\right)\bigg)\\
		&= 2^{9}\pi^{4}\bigg(M_{*}\left(\Gamma M_{*H} + \rho_{*} M_{*J} + \sigma_{*} M_{*\gamma_{*}}\right) - \frac{W_{H}^{2}}{2\sigma_{*}}\left(\sigma_{*} W_{HH}M_{*\gamma_{*}} - \sigma_{*} M_{*H}^{2}\right)\bigg)
		\end{align*}
		from which we have:
		\begin{align}
		D = 2^{9}\pi^{4}\left(M_{*}\left(\Gamma M_{*H} + \rho_{*} M_{*J} + \sigma_{*} M_{*\gamma_{*}}\right) - \frac{W_{H}^{2}}{2}\{W_{H},W_{\gamma_{*}}\}_{H,\gamma_{*}}\right). \label{eq:Dcalc}
		\end{align}
		\section{Calculating matrix elements}\label{appendix:app4}
		In this appendix, we provide some calculations for the matrix elements of \cref{eq:matrixA,eq:matrixa}. We already calculated $ A_{21},A_{22},A_{23},A_{24} $ in \cref{eq:vecComp}. For \cref{eq:mod3Final}, we compute:
		\begin{align*}
		W_{HJ}\partial_{T}W_{U} &= \{W_{U},W_{J}\}_{J,H}J_{T} + \{W_{U},W_{J}\}_{\gamma_{*},H}\gamma_{*T} + \{W_{U},W_{J}\}_{U,H}U_{T} + \{W_{U},W_{J}\}_{\beta,H}\beta_{T}\\
		&= \bigg\{\frac{JW_{H}}{2} - W_{\gamma_{*}}\left(\beta + \frac{U}{2}\right),W_{J}\bigg\}_{J,H}J_{T} + \bigg\{\frac{JW_{H}}{2} - W_{\gamma_{*}}\left(\beta + \frac{U}{2}\right),W_{J}\bigg\}_{\gamma_{*},H}\gamma_{*T}\\
		&+ \bigg\{\frac{JW_{H}}{2} - W_{\gamma_{*}}\left(\beta + \frac{U}{2}\right),W_{J}\bigg\}_{U,H}U_{T} + \bigg\{\frac{JW_{H}}{2} - W_{\gamma_{*}}\left(\beta + \frac{U}{2}\right),W_{J}\bigg\}_{\beta,H}\beta_{T}\\
		&= \left(\tau_{*} + \frac{J}{2}\sigma_{*} + \Gamma\left(\beta + \frac{U}{2}\right)\right )J_{T} + \left(-\frac{J}{2}\rho_{*} - \left(\beta + \frac{U}{2}\right)\{W_{H},W_{\gamma_{*}}\}_{J,\gamma_{*}}\right)\gamma_{*T}\\
		&+ \left(\frac{J}{2}\bigg\{\frac{JW_{H}}{2} - W_{\gamma_{*}}\left(\beta + \frac{U}{2}\right),W_{H}\bigg\}_{H,J} - \frac{1}{2}M_{*}W_{HJ} - \left(\beta + \frac{U}{2}\right)\bigg\{\frac{JW_{H}}{2} - W_{\gamma_{*}}\left(\beta + \frac{U}{2}\right),W_{H}\bigg\}_{\gamma_{*},J}\right)U_{T}\\
		&+ \left(\frac{J}{2}\big\{JW_{H} - UW_{\gamma_{*}},W_{H}\big\}_{H,J} - M_{*}W_{HJ} - \left(\beta + \frac{U}{2}\right)\big\{JW_{H} - UW_{\gamma_{*}},W_{H}\big\}_{\gamma_{*},J}\right)\beta_{T}\\
		&= \left(\tau_{*} + \frac{J}{2}\sigma_{*} + \Gamma\left(\beta + \frac{U}{2}\right)\right )J_{T} + \left(-\frac{J}{2}\rho_{*} - \left(\beta + \frac{U}{2}\right)\{W_{H},W_{\gamma_{*}}\}_{J,\gamma_{*}}\right)\gamma_{*T}\\
		&+ \left(\frac{J}{2}\nu_{*} + \frac{J}{2}\rho_{*}\left(\beta + \frac{U}{2}\right) - \frac{1}{2}M_{*}W_{HJ} + \frac{1}{2}W_{H}M_{*H}\left(\beta + \frac{U}{2}\right) + \left(\beta + \frac{U}{2}\right)^{2}\{W_{H},W_{\gamma_{*}}\}_{J,\gamma_{*}}\right)U_{T}\\
		&+ \left(J\nu_{*} + \frac{J}{2}U\rho_{*} - M_{*}W_{HJ} + W_{H}M_{H}\left(\beta + \frac{U}{2}\right) + U\left(\beta + \frac{U}{2}\right)\{W_{H},W_{\gamma_{*}}\}_{J,\gamma_{*}}\right)\beta_{T}\\
		&= A_{31}J_{T} + A_{32}\gamma_{*T} + A_{33}U_{T} + A_{34}\beta_{T}.
		\end{align*}
		For $ a_{31},a_{32},a_{33},a_{34} $:
		\begin{align*}
		W_{HJ}W_{U}U_{X} - W_{HJ}\partial_{X}W &= W_{HJ}W_{U}U_{X} - \{W,W_{J}\}_{J,H}J_{X} - \{W,W_{J}\}_{\gamma_{*},H}\gamma_{*X} - \{W,W_{J}\}_{U,H}U_{X} - \{W,W_{J}\}_{\beta,H}\beta_{X}\\
		&= W_{H}W_{JJ}J_{X} - (M_{*}W_{HJ} - W_{H}M_{*J})\gamma_{*X} + W_{H}W_{UJ}U_{X} - (W_{\beta}W_{HJ} - W_{H}W_{\beta J})\beta_{X}.
		\end{align*}
		Taking the $ J $-derivatives of $ W_{U} $ and $ W_{\beta} $ defined in \cref{eq:WU,eq:Wbeta} respectively, we have:
		\begin{align*}
		W_{HJ}W_{U}U_{X} - W_{HJ}\partial_{X}W &= W_{H}W_{JJ}J_{X} - (M_{*}W_{HJ} - W_{H}M_{*J})\gamma_{*X} + W_{H}\left(\frac{1}{2}W_{H} + \frac{J}{2}W_{HJ} - M_{*J}\left(\beta + \frac{U}{2}\right)\right)U_{X}\\
		&- \left(W_{HJ}(JW_{H} - UM_{*}) - W_{H}(W_{H} + JW_{HJ} - UM_{*J})\right )\beta_{X}\\
		&= W_{H}W_{JJ}J_{X} - (M_{*}W_{HJ} - W_{H}M_{*J})\gamma_{*X} + \left(\frac{1}{2}W_{H}^{2} + J\tau_{*} - W_{H}M_{*J}\left(\beta + \frac{U}{2}\right)\right)U_{X}\\
		&- \left(-W_{H}^{2} + UW_{H}M_{*J} - UM_{*}W_{HJ}\right)\beta_{X}\\
		&= a_{31}J_{X} + a_{32}\gamma_{*X} + a_{33}U_{X} + a_{34}\beta_{X}.
		\end{align*}
		Finally, in \cref{eq:mod4Final} we compute:
		\begin{align*}
		W_{HJ}W_{H}\partial_{T}M_{*} &= W_{H}\{W_{H},W_{J}\}_{J,\gamma_{*}}J_{T} + W_{H}\{W_{H},W_{\gamma_{*}}\}_{J,\gamma_{*}}\gamma_{*T} + W_{H}\{W_{H},W_{U}\}_{J,\gamma_{*}}U_{T} + W_{H}\{W_{H},W_{\beta}\}_{J,\gamma_{*}}\beta_{T}\\
		&= -W_{H}\Gamma J_{T} + W_{H}\{W_{H},W_{\gamma_{*}}\}_{J,\gamma_{*}}\gamma_{*T} + W_{H}\bigg\{W_{H},\frac{JW_{H}}{2} - W_{\gamma_{*}}\left(\beta + \frac{U}{2}\right)\bigg\}_{J,\gamma_{*}}U_{T}\\
		&+ W_{H}\big\{W_{H},JW_{H} - UW_{\gamma_{*}}\big\}_{J,\gamma_{*}}\beta_{T}\\
		&= -W_{H}\Gamma J_{T} + W_{H}\{W_{H},W_{\gamma_{*}}\}_{J,\gamma_{*}}\gamma_{*T} + W_{H}\left(-\frac{1}{2}W_{H}M_{*H} - \left(\beta + \frac{U}{2}\right)\{W_{H},W_{\gamma_{*}}\}_{J,\gamma_{*}}\right)U_{T}\\
		&+ W_{H}\left(-W_{H}M_{*H} - U\{W_{H},W_{\gamma_{*}}\}_{J,\gamma_{*}}\right)\beta_{T}
		\end{align*}
		Hence for $ A_{41},A_{42},A_{43},A_{44} $, we have:
		\begin{align*}
		W_{HJ}W_{H}\partial_{T}M_{*} - W_{HJ}M_{*}\partial_{T}W_{H} &= \left(-\Gamma W_{H} - \sigma_{*}M_{*}\right)J_{T} + \left(W_{H}\{W_{H},W_{\gamma_{*}}\}_{J,\gamma_{*}} + \rho_{*}M_{*}\right)\\
		&+ \left(-\frac{1}{2}W_{H}^{2}M_{*H} - W_{H}\left(\beta + \frac{U}{2}\right)\{W_{H},W_{\gamma_{*}}\}_{J,\gamma_{*}} - M_{*}\left(\nu_{*} + \rho_{*}\left(\beta + \frac{U}{2}\right)\right)\right)\\
		&+ \left(-W_{H}^{2}M_{*H} - UW_{H}\{W_{H},W_{\gamma_{*}}\}_{J,\gamma_{*}} - M_{*}(2\nu_{*} + U\rho_{*})\right)\\
		&= A_{41}J_{T} + A_{42}\gamma_{*T} + A_{43}U_{T} + A_{44}\beta_{T}.
		\end{align*}
		The $ X $-derivatives of \cref{eq:mod4Final} are:
		\begin{align*}
		&U(W_{HJ}W_{H}\partial_{X}M_{*} - W_{HJ}M_{*}\partial_{X}W_{H}) + W_{HJ}W_{H}M_{*}U_{X} - W_{HJ}W_{H}^{2}J_{X}\\
		&= (UA_{41} - 2\tau_{*}W_{H})J_{X} + UA_{42}\gamma_{*X} + (UA_{43} + 2\tau M_{*})U_{X} + UA_{44}\beta_{X}
		\end{align*}
		\section{Matrix elements for the Schur determinant calculation}\label{appendix:app5}
		In this appendix, we calculate the coefficients in \cref{eq:matrixofquads}, using the expressions for $ m_{11},m_{12},m_{21},m_{22} $ computed in \cref{eq:m11,eq:m12,eq:m21,eq:m22}. We derive a number of useful identities that we use freely in the subsequent calculations:
		\begin{align*}
		\sigma_{*}\{W_{H},W_{\gamma_{*}}\}_{J,\gamma_{*}} - \rho_{*}\Gamma &= \sigma_{*}(W_{HJ}M_{*\gamma_{*}} - M_{*H}M_{*J}) - (M_{*J}W_{HH} - M_{*H}W_{HJ})(M_{*H}W_{JJ} - M_{*J}W_{HJ})\\
		&= W_{HJ}\sigma_{*}M_{*\gamma_{*}} - M_{*H}M_{*J}(W_{HJ}^{2} - W_{HH}W_{JJ})\\
		&- \left(M_{*J}M_{*H}W_{HH}W_{JJ} - M_{*H}^{2}W_{HJ}W_{JJ} - M_{*J}^{2}W_{HH}W_{HJ} + M_{*H}M_{*J}W_{HJ}^{2}\right)\\
		&= W_{HJ}\left(\Gamma M_{*H} + \rho_{*}M_{*J} + \sigma_{*}M_{*\gamma_{*}}\right)\\
		\frac{1}{2}\sigma_{*}W_{H}M_{*H} - \nu_{*}\Gamma &= \frac{1}{2}W_{H}M_{*H}(W_{HJ}^{2} - W_{HH}W_{JJ}) + \frac{1}{2}W_{H}W_{HH}(M_{*H}W_{JJ} - M_{*J}W_{HJ})\\
		&= \frac{1}{2}W_{H}W_{HJ}(M_{*H}W_{HJ} - M_{*J}W_{HH})\\
		&= -\tau_{*}\rho_{*}\\
		\frac{W_{H}^{2}}{2}\sigma_{*} - \nu_{*}W_{H}W_{JJ} + 2\tau_{*}^{2} &= \frac{W_{H}^{2}}{2}(W_{HJ}^{2} - W_{HH}W_{JJ} + W_{HH}W_{JJ} + W_{HJ}^{2})\\
		&= W_{H}^{2}W_{HJ}^{2}\\
		&= 4\tau_{*}^{2}\\
		2\tau_{*}\Gamma - W_{H}W_{JJ}\rho_{*} - W_{H}M_{*J}\sigma_{*} &= W_{H}(W_{HJ}(M_{*H}W_{JJ} - M_{*J}W_{HJ}) - W_{JJ}(M_{*J}W_{HH} - M_{*H}W_{HJ})\\
		&- M_{*J}(W_{HJ}^{2} - W_{HH}W_{JJ}))\\
		&= 2W_{H}W_{HJ}(M_{*H}W_{JJ} - M_{*J}W_{HJ})\\
		&= 4\tau_{*}\Gamma\\
		\sigma_{*}M_{*J} + \rho_{*}W_{JJ} &= M_{*J}(W_{HJ}^{2} - W_{HH}W_{JJ}) + W_{JJ}(M_{*J}W_{HH} - M_{*H}W_{HJ})\\
		&= W_{HJ}(M_{*J}W_{HJ} - M_{*H}W_{JJ})\\
		&= -W_{HJ}\Gamma\\
		\sigma_{*}W_{H}M_{*H} - 2\nu_{*}\Gamma &= W_{H}\left(M_{*H}(W_{HJ}^{2} - W_{HH}W_{JJ}) + W_{HH}(M_{*H}W_{JJ} - M_{*J}W_{HJ})\right)\\
		&= W_{H}W_{HJ}(M_{*H}W_{HJ} - M_{*J}W_{HH})\\
		&= -2\tau_{*}\rho_{*}.
		\end{align*}
		For the coefficients $ a'_{11},b'_{11},c'_{11} $, we have:
		\begin{align*}
		a_{11}' &= W_{H}(\sigma_{*}A_{33} - A_{23}A_{31})\\
		&= W_{H}\bigg[\frac{J}{2}\nu_{*}\sigma_{*} + \frac{J}{2}\rho_{*}\sigma_{*}\left(\beta + \frac{U}{2}\right) - \frac{1}{2}\sigma_{*}M_{*}W_{HJ} + \frac{1}{2}\sigma_{*}W_{H}M_{*H}\left(\beta + \frac{U}{2}\right) + \sigma_{*}\left(\beta + \frac{U}{2}\right)^{2}\{W_{H},W_{\gamma_{*}}\}_{J,\gamma_{*}}\\
		&- \left(\nu_{*} + \rho_{*}\left(\beta +  \frac{U}{2}\right)\right)\left(\tau_{*} + \frac{J}{2}\sigma_{*} + \Gamma\left(\beta + \frac{U}{2}\right)\right )\bigg]\\
		&= W_{H}\bigg[\left(\beta + \frac{U}{2}\right)^{2}\left(\sigma_{*}\{W_{H},W_{\gamma_{*}}\}_{J,\gamma_{*}} -\rho_{*}\Gamma\right) +\left(\beta + \frac{U}{2}\right)\left(\frac{1}{2}\sigma_{*}W_{H}M_{*H} - \nu_{*}\Gamma -\tau_{*}\rho_{*}\right) - \frac{1}{2}\sigma_{*}M_{*}W_{HJ} - \nu_{*}\tau_{*}\bigg]\\
		&= W_{H}W_{HJ}\left(\beta + \frac{U}{2}\right)^{2}(\Gamma M_{*H} + \rho_{*}M_{*J} + \sigma_{*}M_{*\gamma_{*}}) - 2\tau_{*}\rho_{*}W_{H}\left(\beta + \frac{U}{2}\right) - \sigma_{*}\tau_{*}M_{*} - \nu_{*}\tau_{*}W_{H}\\
		b_{11}' &= iW_{H}(\sigma_{*}a_{33} - A_{23}a_{31} + 2\tau_{*}A_{31})\\
		&= iW_{H}\bigg(\frac{W_{H}^{2}}{2}\sigma_{*} + J\tau_{*}\sigma_{*} - W_{H}M_{*J}\sigma_{*}\left(\beta + \frac{U}{2}\right)  - W_{H}W_{JJ}\left(\nu_{*} + \rho_{*}\left(\beta + \frac{U}{2}\right)\right) + 2\tau_{*}\left(\tau_{*} + \frac{J}{2}\sigma_{*} + \Gamma\left(\beta + \frac{U}{2}\right)\right)\bigg)\\
		&= iW_{H}\bigg(\frac{W_{H}^{2}}{2}\sigma_{*} - \nu_{*}W_{H}W_{JJ} + 2\tau_{*}^{2} + 2J\tau_{*}\sigma_{*} + (2\tau_{*}\Gamma - W_{H}W_{JJ}\rho_{*} - W_{H}M_{*J}\sigma_{*})\left(\beta + \frac{U}{2}\right)\bigg)\\
		&= iW_{H}\bigg(4\tau_{*}^{2} + 2J\tau_{*}\sigma_{*} + 4\tau_{*}\Gamma\left(\beta + \frac{U}{2}\right)\bigg)\\
		c'_{11} &= -2\tau_{*}W_{H}a_{31}\\
		&= -2\tau_{*}W_{H}^{2}W_{JJ}
		\end{align*}
		Next for $ a'_{12},b'_{12},c'_{12} $:
		\begin{align*}
		a'_{12} &= W_{H}(\sigma_{*}A_{32} - A_{22}A_{31})\\
		&= W_{H}\left[-\frac{J}{2}\rho_{*}\sigma_{*} - \sigma_{*}\left(\beta + \frac{U}{2}\right)\{W_{H},W_{\gamma_{*}}\}_{J,\gamma_{*}} + \rho_{*}\left(\tau_{*} + \frac{J}{2}\sigma_{*} + \Gamma\left(\beta + \frac{U}{2}\right)\right)\right]\\
		&= W_{H}\left[\left(\beta + \frac{U}{2}\right)\left(\rho_{*}\Gamma - \sigma_{*}\{W_{H},W_{\gamma_{*}}\}_{J,\gamma_{*}}\right) + \tau_{*}\rho_{*}\right]\\
		&= \tau_{*}\rho_{*}W_{H} - 2\tau_{*}\left(\beta + \frac{U}{2}\right)(\Gamma M_{*H} + \rho_{*}M_{*J} + \sigma_{*}M_{*\gamma_{*}})\\
		b'_{12} &= -iW_{H}\left[\sigma_{*}A_{34} + \sigma_{*}UA_{32} - \sigma_{*}a_{32} - A_{24}A_{31} - UA_{22}A_{31} + A_{22}a_{31}\right]\\
		&= -iW_{H}\bigg[J\nu_{*}\sigma_{*} + \sigma_{*}W_{H}M_{*H}\left(\beta + \frac{U}{2}\right) - \sigma_{*}M_{*}W_{HJ} + \frac{J}{2}U\rho_{*}\sigma_{*} + U\sigma_{*}\left(\beta + \frac{U}{2}\right)\{W_{H},W_{\gamma_{*}}\}_{J,\gamma_{*}}\\
		&- \frac{J}{2}U\rho_{*}\sigma_{*} - U\sigma_{*}\left(\beta + \frac{U}{2}\right)\{W_{H},W_{\gamma_{*}}\}_{J,\gamma_{*}} - \sigma_{*}W_{H}M_{*J} + \sigma_{*}W_{HJ}M_{*} - (2\nu_{*} + U\rho_{*})\left(\tau_{*} + \frac{J\sigma_{*}}{2} + \Gamma\left(\beta + \frac{U}{2}\right)\right)\\
		&+ U\rho_{*}\left(\tau_{*} + \frac{J\sigma_{*}}{2} + \Gamma\left(\beta + \frac{U}{2}\right)\right) - \rho_{*}W_{H}W_{JJ}\bigg]\\
		&= -iW_{H}\left[-2\nu_{*}\tau_{*} - W_{H}(\sigma_{*}M_{*J} + \rho_{*}W_{JJ}) + \left(\beta + \frac{U}{2}\right)(\sigma_{*}W_{H}M_{*H} - 2\nu_{*}\Gamma)\right]\\
		&= 2i\tau_{*}W_{H}\left(\nu_{*} - \Gamma + \rho_{*}\left(\beta + \frac{U}{2}\right)\right)\\
		c'_{12} &= W_{H}\left[\sigma_{*}a_{34} + \sigma_{*}Ua_{32} - A_{24}a_{31} - UA_{22}a_{31}\right]\\
		&= W_{H}\left[\sigma_{*}(W_{H}^{2} + UM_{*}W_{HJ} - UW_{H}M_{*J}) + U\sigma_{*}(W_{H}M_{*J} - W_{HJ}M_{*}) - W_{H}W_{JJ}(2\nu_{*} + U\rho_{*}) + U\rho_{*}W_{H}W_{JJ}\right]\\
		&= W_{H}\left[\sigma_{*}W_{H}^{2} - 2\nu_{*}W_{H}W_{JJ}\right]\\
		&= W_{H}^{3}W_{HJ}^{2}\\
		&= 4\tau_{*}^{2}W_{H}.
		\end{align*}
		Now for $ a'_{21},b'_{21},c'_{21} $:
		\begin{align*}
		a'_{21} &= \sigma_{*}A_{43} - A_{23}A_{41}\\
		&= \sigma_{*}\left(-\frac{W_{H}^{2}M_{*H}}{2} -W_{H}\left(\beta + \frac{U}{2}\right)\{W_{H},W_{\gamma_{*}}\}_{J,\gamma_{*}} - M_{*}\left(\nu_{*} + \rho_{*}\left(\beta + \frac{U}{2}\right)\right)\right)\\
		&- \left(\nu_{*} + \rho_{*}\left(\beta +  \frac{U}{2}\right)\right)\left(-\Gamma W_{H} - \sigma_{*}M_{*}\right)\\
		&= W_{H}\left(\beta + \frac{U}{2}\right)(\rho_{*}\Gamma - \sigma_{*}\{W_{H},W_{\gamma_{*}}\}_{J,\gamma_{*}}) + \nu_{*}\Gamma W_{H} - \frac{W_{H}^{2}}{2}\sigma_{*}M_{*H}\\
		&= -2\tau\left(\beta + \frac{U}{2}\right)(\Gamma M_{*H} + \rho_{*}M_{*J} + \sigma_{*}M_{*\gamma_{*}}) + \tau_{*}\rho_{*}W_{H}\\
		b'_{21} &= i(2\tau_{*}\sigma_{*}M_{*} + 2\tau_{*}W_{H}A_{23} + 2\tau_{*}A_{41})\\
		&= 2i\tau_{*}\left(\sigma_{*}M_{*} + W_{H}\left(\nu_{*} + \rho_{*}\left(\beta +  \frac{U}{2}\right)\right) -\Gamma W_{H} - \sigma_{*}M_{*}\right)\\
		&= 2i\tau_{*}W_{H}\left(\nu_{*} - \Gamma + \rho_{*}\left(\beta + \frac{U}{2}\right)\right)\\
		c'_{21} &= 4\tau_{*}^{2}W_{H}
		\end{align*}
		Finally for $ a'_{22},b'_{22},c'_{22} $:
		\begin{align*}
		a'_{22} &= \sigma_{*}A_{42} - A_{22}A_{41}\\
		&= \sigma_{*}\left(W_{H}\{W_{H},W_{\gamma_{*}}\}_{J,\gamma_{*}} + \rho_{*}M_{*}\right) + \rho_{*}\left(-\Gamma W_{H} - \sigma_{*}M_{*}\right)\\
		&= W_{H}\left(\sigma_{*}\{W_{H},W_{\gamma_{*}}\}_{J,\gamma_{*}} - \rho_{*}\Gamma\right)\\
		&= 2\tau_{*}\left(\Gamma M_{*H} + \rho_{*}M_{*J} + \sigma_{*}M_{*\gamma_{*}}\right)\\
		b'_{22} &= -i\left(U\sigma_{*}A_{42} + \sigma_{*}A_{44} - A_{24}A_{41} - 2\tau_{*}W_{H}A_{22} - UA_{22}A_{41}\right)\\
		&= -i\bigg(U\sigma_{*}(W_{H}\{W_{H},W_{\gamma_{*}}\}_{J,\gamma_{*}} + \rho_{*}M_{*}) + \sigma_{*}\left(-W_{H}^{2}M_{*H} - UW_{H}\{W_{H},W_{\gamma_{*}}\}_{J,\gamma_{*}} -M_{*}(2\nu_{*} + U\rho_{*})\right)\\
		&- (2\nu_{*} + U\rho_{*})( -\Gamma W_{H} - \sigma_{*}M_{*}) + 2\tau_{*}\rho_{*}W_{H} + U\rho_{*}(-\Gamma W_{H} - \sigma_{*}M_{*})\bigg)\\
		&= -i\left(-\sigma_{*}W_{H}^{2}M_{*H} + 2\nu_{*}\Gamma W_{H} + 2\tau_{*}\rho_{*}W_{H}\right)\\
		&= -4i\tau_{*}\rho_{*}W_{H}\\
		c'_{22} &= 2\tau_{*}W_{H}A_{24} + 2\tau_{*}UW_{H}A_{22}\\
		&= 2\tau_{*}W_{H}\left(2\nu_{*} + U\rho_{*} - U\rho_{*}\right)\\
		&= 4\tau_{*}\nu_{*}W_{H}.
		\end{align*}
		For the calculation of the quadratics in \cref{eq:Qs}, we have:
		\begin{align*}
		Q_{11}(\lambda,\mu) &= \left[a'_{11} + \left(\beta + \frac{U}{2}\right)a'_{21} + \left(\beta + \frac{U}{2}\right)\left(a'_{12} + \left(\beta + \frac{U}{2}\right)a'_{22}\right)\right]\lambda^{2}\\
		&+ \left[b'_{11} + \left(\beta + \frac{U}{2}\right)b'_{21} + \left(\beta + \frac{U}{2}\right)\left(b'_{12} + \left(\beta + \frac{U}{2}\right)b'_{22}\right)\right]\lambda\mu\\
		&+ \left[c'_{11} + \left(\beta + \frac{U}{2}\right)c'_{21} + \left(\beta + \frac{U}{2}\right)\left(c'_{12} + \left(\beta + \frac{U}{2}\right)c'_{22}\right)\right]\mu^{2}\\
		&= -\tau_{*}\left[\nu_{*}W_{H} + \sigma_{*}M_{*}\right]\lambda^{2} + 2i\tau_{*}W_{H}\left[2\tau_{*} + 2\left(\beta + \frac{U}{2}\right)\nu_{*} + J\sigma_{*}\right]\lambda\mu\\
		&+ 2\tau_{*}W_{H}\left[4\tau_{*}\left(\beta + \frac{U}{2}\right) + 2\nu_{*}\left(\beta + \frac{U}{2}\right)^{2}-W_{H}W_{JJ}\right]\mu^{2}\\
		&= \frac{\tau_{*}}{2^{15}\pi^{5}\sigma_{*}}\left(d_{2}\lambda^{2} + d_{1}\lambda\mu + d_{0}\mu^{2}\right)
		\end{align*}
		Next:
		\begin{align*}
		Q_{12}(\lambda,\mu) &= \left[a'_{12} + \left(\beta + \frac{U}{2}\right)a'_{22}\right]\lambda^{2} + \left[b'_{12} + \left(\beta + \frac{U}{2}\right)b'_{22}\right]\lambda\mu + \left[c'_{12} + \left(\beta + \frac{U}{2}\right)c'_{22}\right]\mu^{2}\\
		&= \rho_{*}\tau_{*}W_{H}\lambda^{2} + 2i\tau_{*}W_{H}\left[\nu_{*} - \Gamma - \rho_{*}\left(\beta + \frac{U}{2}\right)\right]\lambda\mu + 4\tau_{*}W_{H}\left[\tau_{*} + \nu_{*}\left(\beta + \frac{U}{2}\right)\right]\mu^{2}\\
		&= \frac{\tau_{*}}{2^{15}\pi^{5}\sigma_{*}}\left(b_{2}\lambda^{2} + b_{1}\lambda\mu + b_{0}\mu^{2}\right).
		\end{align*}
		Using the symmetry of the matrix in \cref{eq:matrixofquads}, we have:
		\begin{align*}
		Q_{21}(\lambda,\mu) &= \left[a'_{21} + \left(\beta + \frac{U}{2}\right)a'_{22}\right]\lambda^{2} + \left[b'_{21} + \left(\beta + \frac{U}{2}\right)b'_{22}\right]\lambda\mu + \left[c'_{21} + \left(\beta + \frac{U}{2}\right)c'_{22}\right]\mu^{2}\\
		&= \left[a'_{12} + \left(\beta + \frac{U}{2}\right)a'_{22}\right]\lambda^{2} + \left[b'_{12} + \left(\beta + \frac{U}{2}\right)b'_{22}\right]\lambda\mu + \left[c'_{12} + \left(\beta + \frac{U}{2}\right)c'_{22}\right]\mu^{2}\\
		&= Q_{12}(\lambda,\mu).
		\end{align*}
		Finally:
		\begin{align*}
		Q_{22}(\lambda,\mu) &= a'_{22}\lambda^{2} + b'_{22}\lambda\mu + c'_{22}\mu^{2}\\
		&= \frac{\tau_{*}}{2^{15}\pi^{5}\sigma_{*}}\left(a_{2}\lambda^{2} + a_{1}\lambda\mu + a_{0}\mu^{2}\right).
		\end{align*}
		\section{Overview of the case $ W_{JJ} \neq 0 $}\label{appendix:app6}
		In this appendix, we provide an overview for finding the characteristics of the Whitham modulation \cref{eq:mod1Final,eq:mod2Final,eq:mod3Final,eq:mod4Final} when $ W_{JJ} $ and $ W_{HH} $ are assumed to be non-vanishing. This provides justification for \cref{eq:finalDetWJJ}. In a similar manner to \cref{eq:gimplicit}, we start by applying the implicit function theorem to $ W_{J} = 0 $, which yields a continuously differentiable function $ h $ such that:
		\begin{align*}
		J = h(H,U,\gamma_{*},\beta).
		\end{align*}
		Taking derivatives of $ W_{J} = 0 $ provides the relations:
		\begin{align*}
		\partial_{H}:\quad h_{H}W_{JJ} &= - W_{HJ}\\
		\partial_{U}:\quad h_{U}W_{JJ} &= - W_{UJ}\\
		\partial_{\gamma}:\quad h_{\gamma_{*}}W_{JJ} &= - W_{\gamma_{*} J}\\
		\partial_{\beta}:\quad h_{\beta}W_{JJ} &= - W_{\beta J}.
		\end{align*}
		We now apply the chain rule to the $ T $ and $ X $ derivatives in the modulation equations in order to write them in terms of derivatives of the parameters. With $ z = T,X $, the required derivatives are:
		\begin{align*}
		W_{JJ}\partial_{z}W_{H} &= \{W_{H},W_{J}\}_{H,J}H_{z} + \{W_{\gamma},W_{J}\}_{H,J}\gamma_{*z} + \{W_{U},W_{J}\}_{H,J}U_{z} + \{W_{\beta},W_{J}\}_{H,J}\beta_{z}\\
		&= -\sigma_{*}H_{z} + \Gamma \gamma_{*z} - \left(\tau_{*} + \frac{J}{2}\sigma_{*} + \Gamma\left(\beta + \frac{U}{2}\right)\right)U_{z} -  (2\tau_{*} + J\sigma_{*} + U\Gamma)\beta_{z}\\
		W_{JJ}\partial_{z}W_{U} &= \{W_{U},W_{J}\}_{H,J}H_{z} + \{W_{U},W_{J}\}_{\gamma_{*},J}\gamma_{*z} + \{W_{U},W_{J}\}_{U,J}U_{z} + \{W_{U},W_{J}\}_{\beta,J}\beta_{z}\\
		&= -\left(\tau_{*} + \frac{J}{2}\sigma_{*} + \Gamma\left(\beta + \frac{U}{2}\right)\right)H_{z}  + \left(-\frac{W_{H}M_{*J}}{2} + J\Gamma - \left(\beta + \frac{U}{2}\right)\{W_{\gamma_{*}},W_{J}\}_{\gamma_{*},J}\right)\gamma_{*z}\\
		&+ \left( -\frac{1}{4}W_{H}^{2} - J\tau_{*} - \frac{J^{2}}{4}\sigma_{*} - \frac{1}{2}M_{*}W_{JJ} + W_{H}M_{*J}\left(\beta + \frac{U}{2}\right) - J\Gamma\left(\beta + \frac{U}{2}\right) + \left(\beta + \frac{U}{2}\right)^{2}\{W_{\gamma_{*}},W_{J}\}_{\gamma_{*},J}\right)U_{z}\\
		&+ \left(-\frac{1}{2}W_{H}^{2} - 2J\tau_{*} - \frac{J^{2}}{2}\sigma_{*} - M_{*}W_{JJ} + W_{H}M_{*J}\left(\beta + U\right) - J\Gamma(\beta + U) + U\left(\beta + \frac{U}{2}\right)\{W_{\gamma_{*}},W_{J}\}_{\gamma_{*},J}\right)\beta_{z}\\
		W_{JJ}\partial_{z}W &= W_{H}W_{JJ}H_{z} + M_{*}W_{JJ}\gamma_{*z} + W_{U}W_{JJ}U_{z} + W_{JJ}(JW_{H} - UM_{*})\beta_{z}\\
		W_{JJ}\partial_{z}W_{\gamma_{*}} &= \{W_{\gamma_{*}},W_{J}\}_{H,J}H_{z} + \{W_{\gamma_{*}},W_{J}\}_{\gamma_{*},J}\gamma_{*z} + \{W_{\gamma_{*}},W_{J}\}_{U,J}U_{z} + \{W_{\gamma_{*}},W_{J}\}_{\beta,J}\beta_{z}\\
		&= \Gamma H_{z} + \{W_{\gamma_{*}},W_{J}\}_{\gamma_{*},J}\gamma_{*z} + \left(-\frac{W_{H}M_{*J}}{2} + J\Gamma - \left(\beta + \frac{U}{2}\right)\{W_{\gamma_{*}},W_{J}\}_{\gamma_{*},J}\right)U_{z}\\
		&+ \left(-W_{H}M_{*J} + J\Gamma - U\{W_{\gamma_{*}},W_{J}\}_{\gamma_{*},J}\right)\beta_{z}.
		\end{align*}
		As in \cref{eq:qlsystem}, we write the modulation equations in the quasi-linear form:
		\begin{align}
		A\begin{pmatrix}
		H_{T}\\\gamma_{*T}\\U_{T}\\\beta_{T}
		\end{pmatrix} - a\begin{pmatrix}
		H_{X}\\\gamma_{*X}\\U_{X}\\\beta_{X}
		\end{pmatrix} = 0 \label{eq:qlsystem2}
		\end{align}
		with
		\begin{align*}
		A &= \begin{pmatrix}
		0 & 0 & 0 & 1\\
		A_{21} & A_{22} & A_{23} & A_{24}\\
		A_{31} & A_{32} & A_{33} & A_{34}\\
		A_{41} & A_{42} & A_{43} & A_{44}
		\end{pmatrix}\\
		a &= \begin{pmatrix}
		0 & 1 & 0 & 0\\
		UA_{21} & UA_{22} & UA_{23} - W_{H}W_{JJ} & UA_{24}\\
		UA_{31} + a_{31} & UA_{32} + a_{32} & UA_{33} & UA_{34} + a_{34}\\
		UA_{41} + 2\tau W_{H} & UA_{42} + W_{H}^{2}M_{*J} & UA_{43} + a_{43} & UA_{44} + a_{44}
		\end{pmatrix}
		\end{align*}
		and the coefficients
		\begin{align*}
		A_{21} &= -\sigma_{*}\\
		A_{22} &= \Gamma\\
		A_{23} &= -\left(\tau_{*} + \frac{J}{2}\sigma_{*} + \Gamma\left(\beta + \frac{U}{2}\right)\right)\\
		A_{24} &= -(2\tau_{*} + J\sigma_{*} + U\Gamma)\\
		A_{31} &= -\left(\tau_{*} + \frac{J}{2}\sigma_{*} + \Gamma\left(\beta + \frac{U}{2}\right)\right)\\
		A_{32} &= -\frac{W_{H}M_{*J}}{2} + \frac{J}{2}\Gamma -\left(\beta + \frac{U}{2}\right)\{W_{\gamma_{*}},W_{J}\}_{\gamma_{*},J}\\
		A_{33} &= \left( -\frac{1}{4}W_{H}^{2} - J\tau_{*} - \frac{J^{2}}{4}\sigma_{*} - \frac{1}{2}M_{*}W_{JJ} + W_{H}M_{*J}\left(\beta + \frac{U}{2}\right) - J\Gamma\left(\beta + \frac{U}{2}\right) + \left(\beta + \frac{U}{2}\right)^{2}\{W_{\gamma_{*}},W_{J}\}_{\gamma_{*},J}\right)\\
		A_{34} &= \left(-\frac{1}{2}W_{H}^{2} - 2J\tau_{*} - \frac{J^{2}}{2}\sigma_{*} - M_{*}W_{JJ} + W_{H}M_{*J}\left(\beta + U\right) - J\Gamma(\beta + U) + U\left(\beta + \frac{U}{2}\right)\{W_{\gamma_{*}},W_{J}\}_{\gamma_{*},J}\right)\\
		A_{41} &= \Gamma W_{H} + \sigma_{*}M_{*}\\
		A_{42} &= W_{H}\{W_{\gamma_{*}},W_{J}\}_{\gamma_{*},J} - \Gamma M_{*}\\
		A_{43} &= W_{H}\left(-\frac{W_{H}M_{*J}}{2} + \frac{J}{2}\Gamma -\left(\beta + \frac{U}{2}\right)\{W_{\gamma_{*}},W_{J}\}_{\gamma_{*},J}\right) + M_{*}\left(\tau_{*} + \frac{J}{2}\sigma_{*} + \Gamma\left(\beta + \frac{U}{2}\right)\right)\\
		A_{44} &= W_{H}(-W_{H}M_{*J} + J\Gamma - U\{W_{\gamma_{*}},W_{J}\}_{\gamma_{*},J}) + M_{*}(2\tau_{*} + J\sigma_{*} + U\Gamma)\\
		a_{31} &= -W_{H}W_{JJ}\\
		a_{32} &=-M_{*}W_{JJ}\\
		a_{34} &= -W_{JJ}(JW_{H} - UM_{*})\\
		a_{43} &= M_{*}W_{H}W_{JJ} + W_{H}^{2}\left(\frac{W_{H}}{2} + \frac{JW_{HJ}}{2} - M_{*J}\left(\beta + \frac{U}{2}\right)\right)\\
		a_{44} &= W_{H}^{2}(W_{H} + JW_{HJ} - UM_{*J}).
		\end{align*}
		The equation for the characteristics is
		\begin{align*}
		AX' - aT' &= 0.
		\end{align*}
		The matrices $ A,a $ satisfy \cref{eq:nondegenerate}, since we can compute:
		\begin{align}
		\det(AX') &= \frac{W_{H}W_{JJ}^{2}}{2^{10}\pi^{4}}X'^{4}D \neq 0. \label{eq:AX}
		\end{align}
		Moreover, we can follow the same procedure of applying the Schur determinant formula. The upper-left block matrix is:
		\begin{align*}
		P_{11} = \begin{pmatrix}
		0 & -T'\\
		-\sigma_{*}(X'-UT') & \Gamma(X'-UT')
		\end{pmatrix},
		\end{align*}
		which is only singular when $ X' = T' = 0 $ (from \cref{eq:AX}), or when $ X' = UT' $. However, $ X' = UT' $ leads to the determinant calculation:
		\begin{align*}
		\det(UAT' - aT') = W_{H}^{5}W_{JJ}^{2}T'^{4},
		\end{align*}
		which only vanishes when $ T' = 0 $, leading to the trivial solution $ X' = T' = 0 $. From here, the procedure of using the Schur determinant formula and manipulating the determinant calculation is the same as in the Section 3.1. The end result is given in \cref{eq:finalDetWJJ}.
	\end{appendices}
	
	\bibliographystyle{amsalpha}  
	%\bibliography{references}  %%% Remove comment to use the external .bib file (using bibtex).
	%%% and comment out the ``thebibliography'' section.
	
	%%% Comment out this section when you \bibliography{references} is enabled.

\begin{thebibliography}{1}
		
		\bibitem[AS1981]{AS1981}
		{\sc M.J.~Ablowitz and H.~Segur},
		{\em Solitons and the inverse scattering transform},
		SIAM (1981), Philadelphia.
		
		\bibitem[Agrawal2013]{Agrawal2013}
		{\sc G.P.~Agrawal},
		{\em Nonlinear fiber optics},
		Elsevier/Academic Press (2013), Amsterdam, Fifth edition.
		
		\bibitem[AIK1990]{AIK1990}
		{\sc G.L.~Alfimov, A.R.~Its and N.E.~Kulagin},
		{\em Modulation instability of solutions of the nonlinear Schr{\"o}dinger equation},
		Teoret. Mat. Fiz. \textbf{84} (1990), no. 2, 787--793.
		
		\bibitem[BGNR2014]{BGNR2014}
		{\sc S.~Benzoni-Gavage, P.~Noble and L.M.~Rodrigues},
		{\em Slow modulations of periodic waves in Hamiltonian PDEs, with application to capillary fluids},
		J. Nonlinear Sci. \textbf{24} (2014), no. 4, 711--768.
		
		\bibitem[BGMR2016]{BGMR2016}
		{\sc S.~Benzoni-Gavage, C.~Mietka and L.M.~Rodrigues},
		{\em Co-periodic stability of periodic waves in some Hamiltonian PDEs},
		Nonlinearity \textbf{29} (2016), no. 11, 3241--3308.
		
		\bibitem[Bridges2015]{Bridges2015}
		{\sc T.J.~Bridges},
		{\em Breakdown of the Whitham modulation theory and the emergence of dispersion},
		Stud. Appl. Math. \textbf{135} (2015), no. 3, 277--294.
		
		\bibitem[BKZ2021]{BKZ2021}
		{\sc T.J.~Bridges, A.~Kostianko and S.~Zelik},
		{\em Validity of the hyperbolic Whitham modulation equations in Sobolev spaces},
		J. Differential Equations \textbf{274} (2021), 971--995.
		
		\bibitem[BKS2020]{BKS2020}
		{\sc T.J.~Bridges, A.~Kostianko and G.~Schneider},
		{\em A proof of validity for multiphase Whitham modulation theory},
		Proc. R. Soc. Lond. A \textbf{476} (2020), no. 22443, 20200203.
		
		\bibitem[BR2019]{BR2019}
		{\sc T.J.~Bridges and D.J.~Ratliff},
		{\em Krein signatures and Whitham modulation theory: the sign of characteristics and the 'sign characteristic'},
		Stud. Appl. Math. \textbf{142} (2019), no. 3, 314--335.
		
		\bibitem[BJ2010]{BJ2010}
		{\sc J.C.~Bronski and M.A.~Johnson},
		{\em The modulational instability for a generalized Korteweg-de Vries equation},
		Arch. Mech. Rat. Anal. \textbf{197} (2010), no. 2, 357--400.
		
		\bibitem[BDN2011]{BDN2011}
		{\sc N.~Bottman, B.~Deconinck and M.~Nivala},
		{\em Elliptic solutions of the defocusing NLS equation are stable},
		J. Phys. A. \textbf{44} (2011), no. 28, 1--24.
		
		\bibitem[Chen2016]{Chen2016}
		{\sc F.F.~Chen},
		{\em Introduction to plasma physics and controlled fusion},
		Cham: Springer International Publishing (2016). Reprint of the 1984 edition.
		
		\bibitem[CPW2020]{CPW2020}
		{\sc J.~Chen, D.E.~Pelinovsky and R.E.~White},
		{\em Periodic standing waves in the focusing nonlinear Schr{\"o}dinger equation: Rogue waves and modulation instability},
		Phys. D \textbf{405} (2020), 132378.
		
		\bibitem[DS2017]{DS2017}
		{\sc B.~Deconinck and B.L.~Segal},
		{\em The stability spectrum for elliptic solutions to the focusing NLS equation},
		Phys. D \textbf{346} (2017), 1--19.
		
		\bibitem[DS2009]{DS2009}
		{\sc W.-P.~D{\"u}ll and G.~Schneider},
		{\em Validity of Whitham's equations for the modulation of periodic traveling waves in the NLS equation},
		J. Nonlinear Sci. \textbf{19} (2009), no. 5, 453--466.
		
		\bibitem[EH2016]{EH2016}
		{\sc G.A.~El and M.A.~Hoefer},
		{\em Dispersive shock waves and modulation theory},
		Phys. D \textbf{333} (2016), 11--65.
		
		\bibitem[GH2007A]{GH2007A}
		{\sc T.~Gallay and M.~H\v{a}r\v{a}gu{\c{s}}},
		{\em Stability of small periodic waves for the nonlinear Schr{\"o}dinger equation},
		J. Differential Equations \textbf{243} (2007), no. 2, 544--581.
		
		\bibitem[GH2007B]{GH2007B}
		{\sc T.~Gallay and M.~H\v{a}r\v{a}gu{\c{s}}},
		{\em Orbital stability of periodic waves for the nonlinear Schr{\"o}dinger equation},
		J. Dynam. Differential Equations \textbf{19} (2007), no. 4, 825--865
		
		\bibitem[GSS1987]{GSS1987}
		{\sc M.~Grillakis, J.~Shatah and W.~Strauss},
		{\em Stability theory of solitary waves in the presence of symmetry, I},
		J. Func. Anal. \textbf{74} (1987), no. 1, 160--197.
		
		\bibitem[GSS1990]{GSS1990}
		{\sc M.~Grillakis, J.~Shatah and W.~Strauss},
		{\em Stability theory of solitary waves in the presence of symmetry, II},
		J. Func. Anal. \textbf{94} (1990), no. 2, 308--348.
		
		\bibitem[Gross1961]{Gross1961}
		{\sc E.P.~Gross},
		{\em Structure of a quantized vortex in boson systems},
		Nuovo Cimento \textbf{20} (1961), no. 3, 454--477.
		
		\bibitem[GLCT2017]{GLCT2017}
		{\sc S.~Gustafson, S.~Le Coz and T.-P.~Tsai},
		{\em Stability of periodic waves of 1D cubic nonlinear Schr{\"o}dinger equations},
		Appl. Math. Res. Express. AMRX, \textbf{2017} (2017), no. 2, 431--487.
		
		\bibitem[HM2003]{HM2003}
		{\sc A.~Hasegawa and M.~Matsumoto},
		{\em Optical solitons in fibers},
		Springer, (2003), Berlin. Third edition.
		
		\bibitem[HO1972]{HO1972}
		{\sc H.~Hasimoto and H.~Ono},
		{\em Nonlinear modulation of gravity waves},
		J. Phys. Soc. Japan \textbf{33} (1972), no. 3, 805--811.
		
		\bibitem[JMMP2014]{Jones14}
		{\sc C.K.R.T.~Jones, R.~Marangell, P.D.~Miller and R.G.~Plaza},
		{\em Spectral and modulational stability of periodic wavetrains for the nonlinear Klein-Gordon equation},
		J. Differential Equations {\bf 257} (2014), no. 12, 4632--4703.
		
		\bibitem[JP2020]{JP2020}
		{\sc M.A.~Johnson and W.R.~Perkins},
		{\em Modulational instability of viscous fluid conduit periodic waves},
		SIAM J. Math. Anal. \textbf{52} (2020), no. 1, 277--305.
		
		\bibitem[JZ2010]{JZ2010}
		{\sc M.A.~Johnson and K.~Zumbrun},
		{\em Rigorous justification of the Whitham modulation equations for the generalized Korteweg-de Vries equation},
		Stud. Appl. Math. \textbf{125} (2010), no. 1, 69--89.
		
		\bibitem[Kamchatnov2000]{Kamchatnov2000}
		{\sc A.M.~Kamchatnov},
		{\em Nonlinear periodic waves and their modulations: an introductory course},
		World Scientific Publishing (2000), Singapore.
		
		\bibitem[LBJM2019]{LBJM2019}
		{\sc K.P.~Leisman, J.C.~Bronski, M.A.~Johnson and R.~Marangell},
		{\em Stability of Traveling wave solutions of Nonlinear Dispersive equations of NLS type},
		Preprint (2019), arXiv:1910.05392.
		
		\bibitem[LTE2019]{LTE2019}
		{\sc C.S.~Liu, V.K.~Tripathi and B.~Eliasson},
		{\em High-power laser-plasma interaction},
		Cambridge University Press, (2019), Cambridge.
		
		\bibitem[MOMT1976]{MOMT1976}
		{\sc K.~Mio, T.~Ogino, K.~Minami and S.~Takeda},
		{\em Modified nonlinear Schr{\"o}dinger equation for Alfv{\'en} waves propagating along the magnetic field in cold plasmas},
		J. Phys. Soc. Japan \textbf{41} (1976), no. 1, 265--271.
		
		\bibitem[OZ2003]{OZ2003}
		{\sc M.~Oh and K.~Zumbrun},
		{\em Stability of periodic solutions of conservation laws with viscosity: analysis of the evans function},
		Arch. Ration. Mech. Anal. \textbf{166} (2003), no. 2, 99--166.
		
		\bibitem[OZ2006]{OZ2006}
		{\sc M.~Oh and K.~Zumbrun},
		{\em Low-frequency stability analysis of periodic travelling-wave solutions of viscous conservation laws in several dimensions},
		Z. Anal. Anwend. \textbf{25} (2006), no. 1, 1--21.
		
		\bibitem[Pitaevskii1961]{Pitaevskii1961}
		{\sc L.P.~Pitaevskii},
		{\em Vortex lines in an imperfect Bose gas},
		Sov. Phys.--JETP \textbf{13} (1961), no. 2, 451--454.
		
		\bibitem[Rowlands1974]{Rowlands1974}
		{\sc G.~Rowlands},
		{\em On the stability of solutions of the non-linear Schr{\"o}dinger equation},
		IMA J. Appl. Math. \textbf{13} (1974), no. 3, 367--377.
		
		\bibitem[Serre2005]{Serre2005}
		{\sc D.~Serre},
		{\em Spectral stability of periodic solutions of viscous conservation laws: large wavelength analysis},
		Communications in Partial Differential Equations \textbf{30} (2005), no. 1--2, 259--282.
		
		\bibitem[SS1999]{SS1999}
		{\sc C.~Sulem and P.-L.~Sulem},
		{\em The nonlinear Schr{\"o}dinger equation: self-focusing and wave collapse},
		Springer-Verlag, (1999), New York.
		
		\bibitem[Whi1965A]{Whi1965A}
		{\sc G.B.~Whitham},
		{\em Non-linear dispersive waves},
		Proc. R. Soc. Lond. A \textbf{283} (1965), no. 1393, 238--261.
		
		\bibitem[Whi1965B]{Whi1965B}
		{\sc G.B.~Whitham},
		{\em A general approach to linear and non-linear dispersive waves using a Lagrangian},
		Journal of Fluid Mechanics \textbf{22} (1965), no. 2, 273--283.
		
		\bibitem[Whi1967]{Whi1967}
		{\sc G.B.~Whitham},
		{\em Variational methods and applications to water waves},
		Proc. R. Soc. Lond. A \textbf{299} (1967), no. 1456, 6--25.
		
		\bibitem[Whi1970]{Whi1970}
		{\sc G.B.~Whitham},
		{\em Two-timing, variational principles and waves},
		Journal of Fluid Mechanics \textbf{44} (1970), no. 2, 373--395.
		
		\bibitem[Whi1999]{Whi1999}
		{\sc G.B.~Whitham},
		{\em Linear and nonlinear waves},
		John Wiley \& Sons, Inc. (1999), New York. Reprint of the 1974 original.
		
		\bibitem[Zakharov1968]{Zakharov1968}
		{\sc V.E.~Zakharov},
		{\em Stability of periodic waves of finite amplitude on the surface of a deep fluid},
		J. Appl. Mech. Tech. Phys. \textbf{9} (1968), no. 2, 190--194.
		
		\bibitem[Zhang2006]{Zhang2006}
		{\sc F.~Zhang},
		{\em The Schur complement and its applications},
		Springer Science \& Business Media (2006), vol. 4.
	\end{thebibliography}

\end{document}